\numberwithin{equation}{section}
\newcommand{\IE}{\mathbb{E}}
\newcommand{\IR}{\mathbb{R}}
\newcommand{\IN}{\mathbb{N}}
\newcommand{\IP}{\mathbb{P}}
\newcommand{\cB}{{\mathcal B}}
\newcommand{\cE}{{\mathcal E}}
\newcommand{\cF}{{\mathcal F}}
\newcommand{\cG}{{\mathcal G}}
\newcommand{\cL}{{\mathcal L}}
\newcommand{\cM}{{\mathcal M}}
\newcommand{\Dx}{{\Delta x}}
\newcommand{\Dt}{{\Delta t}}
\newcommand{\st}{\;\bigm|\;}
\newcommand{\test}{\varphi}
\newcommand{\R}{\mathbb{R}}
\newcommand{\n}{\noindent}
\newcommand{\dis}{\displaystyle}
\newcommand{\ov}{\overline}
\newcommand{\om}{\omega\notag}
\newcommand{\rt}{\rightarrow}
\newcommand{\wh}{\widehat}
\newcommand{\norm}[1]{\left\|#1\right\|}
\newcommand{\abs}[1]{\left|#1\right|}
\newcommand{\Pt}{\Pi_T}
\newcommand{\seq}[1]{\left\{#1\right\}}
\newcommand{\Dm}{D_-}
\newcommand{\Dp}{D_+}
\newcommand{\CutAf}{C_{u_0,t,A,f}}
\newcommand{\whCutAf}{\widehat{C}_{u_0,t,A,f}}
\DeclareMathOperator*{\dv}{div}
\DeclareMathOperator*{\grad}{grad}
\newtheorem{lemma}{Lemma}[section]
\newtheorem{proposition}[lemma]{Proposition}
\newtheorem{theorem}[lemma]{Theorem}
\newtheorem{remark}[lemma]{Remark}
\newtheorem{definition}[lemma]{Definition}
\DeclareMathOperator*{\Lip}{Lip}
\DeclareMathOperator*{\essup}{ess\;sup}
\newcommand{\cs}[1]{#1}
\numberwithin{equation}{section}
 {\ifnum\value{asnr}=0 \stepcounter{asnr} 
   \begin{enumerate}[label=\textbf{A}.\arabic{enumi}]
     \else
     \begin{enumerate}[label=\textbf{A}.\arabic{enumi},resume] \fi}
 {\end{enumerate}}
 \newcounter{defnr}
 \newenvironment{Definitions} %
 {\ifnum\value{defnr}=0 \stepcounter{defnr} 
   \begin{enumerate}[label=\textbf{D}.\arabic{enumi}]
     \else
     \begin{enumerate}[label=\textbf{D}.\arabic{enumi},resume] \fi}
 {\end{enumerate}}
\begin{document}
\title[MLMC diffusion]{ Multilevel Monte Carlo for random degenerate scalar convection diffusion equation}

\author[U. Koley]{U. Koley} \address[Ujjwal Koley] {\newline  
   Institut f\"{u}r Mathematik,  \newline
   Julius-Maximilians-Universit\"{a}t W\"{u}rzburg,   
\newline Campus Hubland Nord, Emil-Fischer-Strasse 30, \newline 97074,
W\"{u}rzburg, Germany.} 
\email[]{toujjwal@gmail.com}

\author[N. H. Risebro]{N. H. Risebro} \address[Nils Henrik
Risebro]{\newline Centre of Mathematics for Applications (CMA) 
  \newline University of Oslo\newline P.O. Box 1053, Blindern\newline
 N--0316 Oslo, Norway} \email[]{nilshr@math.uio.no}

\author[Ch.~Schwab]{Ch. Schwab$^+$}\thanks{$^+$Research supported in part by ERC AdG247277 STAHDPDE} 
\address[Christoph Schwab]{
\newline Seminar for Applied Mathematics \newline
   ETH\newline HG G. 57.1, \newline R\"amistrasse 101, Z\"urich,
   Switzerland.} \email{schwab@sam.math.ethz.ch}

\author[F. Weber]{F. Weber}
\address[Franziska Weber] {\newline Centre of Mathematics for Applications (CMA)
  \newline University of Oslo\newline P.O. Box 1053, Blindern\newline
  N--0316 Oslo, Norway}\email{franziska.weber@cma.uio.no}

 \date{\today} 

\begin{abstract}
\cs{
We consider the numerical solution of 
scalar, nonlinear degenerate
convection-diffusion problems
with random diffusion coefficient and with
random flux functions. 
Building on recent results on 
the existence, uniqueness and 
continuous dependence of weak solutions
on data in the deterministic case, 
we develop a definition of 
random entropy solution.
We establish
existence, uniqueness, measurability and 
integrability results for these random 
entropy solutions, 
generalizing \cite{Mishr478,MishSch10a}
to possibly degenerate 
hyperbolic-parabolic problems with random data.
We next address the numerical approximation
of random entropy solutions,
specifically the approximation of the deterministic
first and second order statistics.
To this end, we consider explicit and implicit time discretization
and Finite Difference methods in space, and single as well as
Multi-Level Monte-Carlo methods to sample the statistics.
We establish convergence rate estimates with respect to
the discretization parameters, as well as with respect to the
overall work, indicating substantial gains in efficiency
are afforded under realistic regularity assumptions by the use
of the Multi-Level Monte-Carlo method.
Numerical experiments are presented which confirm the theoretical
convergence estimates.
}
\end{abstract}

\maketitle

\tableofcontents

\section{Introduction}
\label{sec:intro}
Many problems in physics and engineering are modeled by nonlinear, 
possibly strongly degenerate, convection diffusion equation.  
The Cauchy problem for such equations takes the form
\begin{equation}
  \label{eq:main1}
  \begin{cases}
    u_t + \dv f(u) = \Delta A(u), &(x,t)\in \Pt,\\
    u(0,x)=u_0(x), & x\in \R^d,
  \end{cases}
\end{equation}
where $\Pt=\R^d\times (0,T)$ with $T>0$ fixed, $u:\Pt\to \R$ is the
unknown function, $f=(f_1,\ldots,f_d)$ the flux function, and $A$ the nonlinear
diffusion. Regarding this, the basic assumption is that
$a(u):=A'(u)\ge 0$ for all $u$.  When \eqref{eq:main1} is
nondegenerate, i.e., $a(u) > 0$, it is well known that
\eqref{eq:main1} admits a unique classical solution \cite{ok1961}. This
contrasts with the degenerate case where $a(u)$ may vanish for some
values of $u$. A simple example of a degenerate equation is the porous
medium equation
\begin{align*}
 u_t = \Delta (u^m), \  m>1, 
\end{align*}
which degenerates at $u = 0$. 
This equation has served as a simple model to describe processes 
involving fluid flow, heat transfer or diffusion. 
Examples of applications are in the description of the flow of an 
isentropic gas through a porous medium, modelled by Leibenzon \cite{leibenzon1930} and 
Muskat \cite{muskat1936} around 1930, in the study of groundwater flow by 
Boussisnesq in 1903 \cite{boussinesq} or in heat radiation in plasmas, 
Zel'dovich and collaborators around 1950, \cite{zeldovich}. 
In general, a manifestation of the
degeneracy in \eqref{eq:main1} is the finite speed of propagation of
disturbances. If $a(0)=0$, and if at some fixed time the solution $u$ has compact
support, then it will continue to have compact support for all later
times.

By the term ``strongly degenerate'' we mean that there 
is an open interval such that $a(u)=0$ if $u$ is in this interval.
Hence, the class of equations under consideration is very
large and contains the heat equation, the porous
medium equation and scalar conservation laws.
Independently of the smoothness of the initial data, due to the
degeneracy of the diffusion, singularities may form in the solution
$u$. Therefore we consider \cs{weak solutions which are defined
as follows}.
\begin{definition}
  Set $\Pt=\R^d \times (0,T)$. 
  A function 
  \begin{equation*}
    u(t,x) \in C\left([0,T];L^1(\R^d)\right) \cap L^\infty(\Pt)
  \end{equation*}
  is a weak solution of the initial value problem 
  \eqref{eq:main1} if it satisfies:
  \begin{Definitions}
  \item $\grad A(u) \in L^\infty(\Pt)$.\label{def:w1}
  \item For all test functions $\test\in
    \mathcal{D}(\R^d \times [0,T))$ \label{def:w2}
    \begin{equation}
      \label{eq:weaksol}
      \iint_{\Pt} \left( u\test_t + f(u)\cdot \grad \test + A(u)
        \Delta \test \right)\,dx \,dt 
      + \int_{\R^d} u_0(x) \test(x,0) \,dx= 0.
    \end{equation}
  \end{Definitions}
\end{definition}
In view of the existence theory, the condition~\ref{def:w1} is
natural, and thanks to this we can replace \eqref{eq:weaksol} 
by
\begin{equation*}
  \iint_{\Pt} u\test_t +\left( f(u) - \grad A(u)\right)\cdot \grad\test \,dxdt +
  \int_{\R^d} u_0(x) \test(x,0) \,dx= 0. 
\end{equation*}
If $A$ is constant on a whole interval, then weak solutions are not
uniquely determined by their initial data, and one must impose an
additional entropy condition to single out the physically relevant
solution. A weak solution satisfies the entropy condition if
\begin{equation}
  \label{eq:entcond1}
  \varrho(u)_t + \dv q(u) - \Delta r(u) \le 0\ \text{in $\mathcal{D}'(\Pt)$,}
\end{equation}
for all convex, twice differentiable functions $\varrho:\R\to \R$,
where $q$ and $r$ are defined by
\begin{equation*}
  q'(u)=\varrho'(u)f'(u), \ \text{and}\ r'(u)=\varrho'(u) A'(u).
\end{equation*}
Via a standard limiting argument this implies that \eqref{eq:entcond1}
holds for the Kru\v{z}kov entropies $\varrho(u)=\abs{u-c}$ for all
constants $c$. We call a weak solution satisfying the entropy
condition an entropy solution.

For scalar conservation laws, the entropy framework (usually called
entropy conditions) was introduced by Kru\v{z}kov \cite{Kruzkov} and
Vol'pert \cite{Volpert}, while for degenerate parabolic equations
entropy solution were first considered by Vol'pert and Hudajev
\cite{VolpertHudajev}. 
Uniqueness of entropy solutions to \eqref{eq:main1} 
was first proved by Carrillo \cite{Carrillo}. 

Over the years, there has been a growing interest in numerical
approximation of entropy solutions to degenerate parabolic
equations. Finite difference and finite volume schemes for degenerate
equations were analysed by Evje and Karlsen
\cite{ek2000,EvjeKarlsen2, ek1998, EvjeKarlsen4} (using
upwind difference schemes), Holden \textit{et al}.~\cite{Holdenetal1,
  Holdenetal2} (using operator splitting methods), Kurganov and Tadmor
\cite{KurganovTadmor} (central difference schemes), Bouchut \textit{et
  al}.~\cite{Boucutetal} (kinetic BGK schemes), Afif and Amaziane
\cite{AfifAmaziane} and Ohlberger, Gallou\"{e}t \textit{et
  al}.~\cite{Ohlberger, Gall-1, Gall-2} (finite volume methods),
Cockburn and Shu \cite{CockburnShu} (discontinuous Galerkin methods)
and Karlsen and Risebro \cite{KRT1,KRT2} (monotone difference
schemes). Many of the above papers show that the approximate solutions
converge to the unique entropy solution as the discretization
parameter vanishes. Rigorous estimates of the convergence rate of
finite volume schemes for degenerate parabolic equations were proved
in \cite{KarlsenRisebroStorrosten1} (1-d) and
\cite{KarlsenRisebroStorrosten2} (multi-d).

This \emph{classical} paradigm for designing efficient numerical
schemes assumes that {\em data for \eqref{eq:main1}, i.e.,
  initial data $u_0$, convective flux and diffusive flux are known exactly.  }

In many situations of practical interest, however, these data are not
known exactly due to inherent uncertainty in modelling and
measurements of physical parameters such as, for example, the specific
heats in the equation of state for compressible gases, or the relative
permeabilities in models of multi-phase flow in porous media.  Often,
the initial data are known only up to certain statistical quantities
of interest like the mean, variance, higher moments, and in some
cases, the law of the stochastic initial data. In such cases, a
mathematical formulation of \eqref{eq:main1} is required which allows
for \emph{random data}. The problem of random initial data was
considered in \cite{MishSch10a}, and the existence and uniqueness of a
random entropy solution was shown, and a convergence analysis for MLMC
FV discretizations was given. \cs{The} MLMC discretization of balance
laws with random source terms was investigated in \cite{MishSchSuk11b}.

In \cite{MishSch10a} a mathematical framework was developed for
scalar conservation laws with random initial data. This framework was
extended to include random flux functions in \cite{Mishr478}.

The aim of this paper is \emph{ to extend this 
  mathematical framework to include degenerate
  convection diffusion equations with random convective and diffusive
  flux functions with \cs{possibly} correlated random perturbations.}
\cs{
Its outline is as follows. In Section \ref{sec:prob} we review notions
from probability and from random variables taking values in separable
Banach spaces. Section \ref{ssec:SCL} is devoted to a review of 
convergence rates from \cite{KarlsenRisebroStorrosten1,KarlsenRisebroStorrosten2}
on convergence rates for scalar, degenerate deterministic 
convection-diffusion problems. Particular attention is paid to
the definition of entropy solutions and to existence-, uniqueness-
and continuous dependence results, and to the definition of the 
random entropy solutions, and to sufficient conditions ensuring 
their measurability and integrability.
In Section \ref{sec:schemes}, we then address the discretization.
First, again reviewing convergence rates of FD schemes for the 
deterministic case from \cite{KarlsenRisebroStorrosten1,KarlsenRisebroStorrosten2},
which we then extend to Monte-Carlo as well as Multi-Level Monte-Carlo
versions for the degenerate convection-diffusion problem with
random coefficients and flux functions.
The final Section \ref{sec:NumExp} is then devoted to numerical
experiments which confirm the theoretical convergence estimates
and, in fact, indicate that they probably are pessimistic, at 
least in the particular test problems considered.
}

\section{Preliminaries from Probability}
\label{sec:prob}
We use the concept of random variables taking values in function
spaces.  To this end, we recapitulate basic concepts from
\cite[Chapter 1]{DaPrZab91}.

Let $(\Omega, \cF)$ be a measurable space, with $\Omega$ denoting the
set of all elementary events, and $\cF$ a $\sigma$-algebra of all
possible events in our probability model. If $(E, \cG)$ denotes a
second measurable space, then an {\em $E$-valued random variable} (or
random variable taking values in $E$) is any mapping $X: \Omega
\rightarrow E$ such that the set $\{\omega \in \Omega$: $X(\omega) \in
A\} = \{X \in A\} \in \cF$ for any $A \in \cG$, i.e., such that $X$ is
a $\cG$-measurable mapping from $\Omega$ into $E$.

Assume now that $E$ is a metric space; with the Borel $\sigma$-field
$\cB(E)$, $(E, \cB(E))$ is a measurable space and we shall always
assume that $E$-valued random variables $X: \Omega \rightarrow E$ will
be $(\cF, \cB(E))$ measurable.  If $E$ is a separable Banach space
with norm $\|\circ \|_E$ and (topological) dual $E^*$, then $\cB(E)$
is the smallest $\sigma$-field of subsets of $E$ containing all sets
\begin{equation*}
  \{x \in E: \varphi(x) \le \alpha\}, \; \varphi \in E^*, \;\alpha \in \IR\,.
\end{equation*}

\n Hence if $E$ is a separable Banach space, $X: \Omega \rightarrow E$
is an $E$-valued random variable iff for every $\varphi \in E^*$,
$\omega \longmapsto \varphi (X(\omega)) \in \IR^1$ is an
$\IR^1$-valued random variable. 
Moreover, by \cite[Lemma 1.5, p.19]{DaPrZab91} 
the norm $\Omega\ni \omega\mapsto\|X(\omega)\|_E\in \mathbb{R}$ is a measurable mapping.




%
%


The random variable $X: \Omega \rightarrow E$ is called {\em Bochner
  integrable} if, for any probability measure $\IP$ on the measurable
space $(\Omega, \cF)$,
\begin{equation*}
  \dis\int_\Omega \|X(\om)\|_E \, \IP(d\om) < \infty\,.
\end{equation*}

\medskip\n A probability measure $\IP$ on $(\Omega, \cF)$ is any
$\sigma$-additive set function from $\Omega$ into $[0,1]$ such that
$\IP(\Omega) = 1$, and the measure space $(\Omega, \cF, \IP)$ is
called probability space.  We shall assume that $(\Omega, \cF, \IP)$
is complete.

If $X: (\Omega, \cF) \rightarrow (E, \cE)$ is a random variable,
$\cL(X)$ denotes the {\em law of $X$ under $\IP$}, i.e.,
\begin{equation*}
  \cL(X) (A) = \IP (\{\om \in \Omega: X (\om) \in A\}) \quad \forall A \in \cE\,.
\end{equation*}
The image measure $\mu_X = \cL(X)$ on $(E,\cE)$ is called law or
distribution of $X$.

A random variable taking values in $E$ is called {\em simple} if it
can take only finitely many values, i.e., if it has the explicit form
(with $\chi_A$ the indicator function of $A \in \cF$)
\begin{equation*}
  X = \dis\sum\limits^N_{i=1} \,x_i \, \chi_{A_i}, \quad A_i \in \cF, \; x_i \in E, \; N < \infty\,.
\end{equation*}

\n We set, for simple random variables $X$ taking values in $E$ and
for any $B \in \cF$,
\begin{equation}\label{2.7}
  \dis\int_B X(\om) \, \IP(d\om) 
  = 
  \dis\int_B \,Xd \,\IP 
  := 
  \dis\sum\limits^N_{i=1} x_i \,\IP(A_i \cap B)\;.
\end{equation}
By density, for such $X(\cdot)$, and all $B \in \cF$,
\begin{equation}\label{2.8}
  \Big\| \dis\int_B X(\om) \, \IP(d\om)\Big\|_E 
  \le 
  \dis\int_B \|X(\om)\|_E \, \IP(d \om)\,.
\end{equation}

\medskip\n For any random variable $X: \Omega \rightarrow E$ which is
Bochner integrable, there exists a sequence $\{X_m\}_{m \in \IN}$ of
simple random variables such that, for all $\omega \in \Omega$,
$\|X(\om) - X_m(\om)\|_E \rt 0$ as $m \rightarrow \infty$. Therefore,
(\ref{2.7}) and (\ref{2.8}) extend in the usual fashion by continuity
to any $E$-valued random variable.  We denote the integral
\begin{equation*}
  \dis\int_\Omega X(\om) \,\IP(d\om) 
  = 
  \lim\limits_{m \rightarrow \infty} \dis\int_\Omega X_m (\om) \,\IP(d \om) \in E
\end{equation*}
by $\IE[X]$ (``expectation'' of $X$).
%
%
%
We shall require for $1 \le p \le \infty$ Bochner spaces of
$p$-summable random variables $X$ taking values in the Banach space
$E$.  By $L^1(\Omega, \cF, \IP; E)$ we denote the set of all
(equivalence classes of) integrable, $E$-valued random variables $X$,
equipped with the norm
\begin{equation*}
  \|X\|_{L^1(\Omega; E)} 
  = 
  \dis\int_\Omega  \|X(\om)\|_E \,\IP(d\om)
  =
  \IE (\|X\|_E) \;.
\end{equation*}
More generally, for $1\leq p < \infty$, we define $L^p(\Omega, \cF,
\IP; E)$ as the set of $p$-summable random variables taking values in
$E$ and equip it with norm
\begin{equation*}
  \|X\|_{L^p(\Omega; E)} : = (\IE(\|X\|^p_E))^{1/p}, \; 1 \le p < \infty\,.
\end{equation*}
For $p=\infty$, we denote by $L^\infty(\Omega, \cF, \IP; E)$ the set
of all $E$-valued random variables which are essentially bounded.
This set is a Banach space equipped with the norm
\begin{equation*}
  \|X\|_{L^\infty(\Omega; E)} 
  := 
 \essup_{\om \in \Omega} \|X(\om)\|_E
  \,.
\end{equation*}
If $T < \infty$ and $\Omega = [0,T]$, $\cF = \cB([0,T])$, we write
$L^p(0,T; E)$.  Note that for any separable Banach space $E$, and for
any $r \ge p \ge 1$,
\begin{equation*}
  L^r(0,T; E), \; C^0(0,T;E) \in \cB(L^p(0,T; E))\,.
\end{equation*}

In the following, we will be interested in random variables
$X:\Omega\rightarrow E_j$, $j=1,2$, mapping from some probability
space $(\Omega,\mathcal{F},P)$ into subsets of the Banach spaces
$E_j$, $j=1,2$, equipped with the Borel $\sigma$-algebra
$\mathcal{B}(E_j)$, where $E_1=L^1(\mathbb{R})\times C^1(I)\times
C^1(I)$, for a closed and bounded interval $I=[M_-,M_+]\subset (-\infty,\infty)$, $-\infty<M_-<M_+<\infty$,  and
$E_2=C([0,T];L^1(\mathbb{R}))$, $T>0$. 
On $C^1(I)$,  we choose the norm
\begin{equation*}
  \|f\|_{C^1(I)}=\sup_{x\in I}{|f(x)|} +\sup_{x\in I} {|f'(x)|}, \quad f\in C^1(I),
\end{equation*}
on $E_1$, 
we will use
\begin{equation*}
  \|\mathbf{g}\|_{E_1}=\|g_1\|_{L^1(\mathbb{R})}
  +\|g_2\|_{C^1(I)}+\|g_3\|_{C^1(I)},\quad 
  \mathbf{g}=(g_1,g_2,g_3)\in E_1, 
\end{equation*}
and on $E_2$,
\begin{equation*}
  \|h\|_{E_2}=\sup_{0\leq t\le T}\int_{\mathbb{R}}|h(t,x)|\, dx, \quad h\in E_2. 
\end{equation*}
Furthermore, we will need the following special case of the fact that
a continuous mapping is measurable:
\begin{lemma}\label{lem:continuity}
  Let $(\Omega,\mathcal{F})$ be a measurable space, $B_1, B_2$ be
  Banach spaces equipped with the Borel $\sigma$-algebras
  $\mathcal{B}(B_j)$, and let $j=1,2$, $X:\Omega\rightarrow B_1$ be a random
  variable and $\Psi:B_1\rightarrow B_2$ be a continuous mapping, that
  is for $x_1,x_2\in B_1$,
  \begin{equation*}
    \|\Psi(x_1)-\Psi(x_2)\|_{B_2}\le \lambda(\|x_1-x_2\|_{B_1}),
  \end{equation*}
where 
$\lambda:\R_{\geq 0} \rightarrow \R_{\geq 0}$ 
is a continuous function which satisfies 
$\lambda(0)=0$ and which increases monotonically in $\R_{\geq 0}$.

Then the mapping 
$\Omega\ni \omega\mapsto \Psi(X(\omega))\in B_2$ is 
$(B_2,\cB(B_2))$ - measurable, i.e., it is
a $B_2$-valued random variable.
\end{lemma}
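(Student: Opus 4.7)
The plan is to observe that the statement is essentially the standard fact that the composition of a measurable map with a continuous map is Borel measurable, so the proof reduces to two clean steps.

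First, I would verify that $\Psi:B_1\to B_2$ is continuous in the usual topological sense. The hypothesis provides a modulus of continuity $\lambda:\R_{\geq 0}\to\R_{\geq 0}$ with $\lambda(0)=0$, continuous and monotone increasing. Hence, given $x_1\in B_1$ and $\varepsilon>0$, by continuity of $\lambda$ at $0$ there exists $\delta>0$ with $\lambda(s)<\varepsilon$ for all $s<\delta$; then $\|x_1-x_2\|_{B_1}<\delta$ forces $\|\Psi(x_1)-\Psi(x_2)\|_{B_2}<\varepsilon$. So $\Psi$ is (uniformly) continuous between the metric spaces $(B_1,\|\cdot\|_{B_1})$ and $(B_2,\|\cdot\|_{B_2})$.

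Next, I would invoke the standard fact that any continuous map between topological spaces is measurable with respect to the corresponding Borel $\sigma$-algebras: for every open set $U\subset B_2$, the preimage $\Psi^{-1}(U)$ is open in $B_1$, hence in $\mathcal{B}(B_1)$. Since $\mathcal{B}(B_2)$ is generated by the open sets, this shows $\Psi$ is $(\mathcal{B}(B_1),\mathcal{B}(B_2))$-measurable.

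Finally, I would combine this with the measurability of $X$. By assumption, $X:\Omega\to B_1$ is $(\mathcal{F},\mathcal{B}(B_1))$-measurable. For any $A\in\mathcal{B}(B_2)$,
\begin{equation*}
  (\Psi\circ X)^{-1}(A)=X^{-1}\bigl(\Psi^{-1}(A)\bigr)\in\mathcal{F},
\end{equation*}
because $\Psi^{-1}(A)\in\mathcal{B}(B_1)$ by Step 2 and $X$ is measurable. Thus $\Psi\circ X$ is an $(\mathcal{F},\mathcal{B}(B_2))$-measurable mapping, i.e.\ a $B_2$-valued random variable, as claimed. There is no serious obstacle here; the only point worth flagging is that the hypothesis on $\lambda$ is stated in a slightly unusual form, so one must extract ordinary continuity of $\Psi$ from it before the standard argument applies.
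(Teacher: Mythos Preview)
Your proof is correct and follows essentially the same approach as the paper: both reduce the claim to showing that $\Psi$ is $(\mathcal{B}(B_1),\mathcal{B}(B_2))$-measurable via continuity (preimages of open sets are open), then compose with the measurability of $X$. The only minor difference is emphasis: you explicitly extract topological continuity from the modulus $\lambda$, while the paper spells out in slightly more detail why it suffices to check preimages on the open generators of $\mathcal{B}(B_2)$.
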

\begin{proof}
  We have to show that for any $A_2\in \mathcal{B}(B_2)$,
  $F=(\Psi\circ X)^{-1}(A_2)\in \mathcal{F}$. Since $(\Psi\circ
  X)^{-1}(A_2)=X^{-1}(\Psi^{-1}(A_2))$ and by the assumption that $X$
  is a random variable $X^{-1}(A_1)\in \mathcal{F}$ for every $A_1\in
  \mathcal{B}(B_1)$, this amounts to showing that $\Psi^{-1}(A_2)\in
  \mathcal{B}(B_1)$ for any $A_2\in \mathcal{B}(B_2)$. Since the Borel
  $\sigma$-algebra is generated by the open sets and the inverse image
  of a mapping $f:\Omega_1\rightarrow \Omega_2$ has the two
  fundamental properties
  \begin{equation*}
    \bigcup_{i\in \mathcal{I}}f^{-1}(C_i)=f^{-1}(\bigcup_{i\in
      \mathcal{I}}C_i),\quad   \bigcap_{i\in
      \mathcal{I}}f^{-1}(C_i)=f^{-1}(\bigcap_{i\in \mathcal{I}}C_i), 
  \end{equation*}
  for a countable index set $\mathcal{I}$ and \cs{any countable collection 
   $\{C_i\}_{i\in\mathcal{I}}$ of sets $C_i\subset \Omega_2$},
  it is enough to verify this for an \cs{arbitrary open, nonempty
  set} $A_2\in \mathcal{B}(B_2)$. That $\Psi^{-1}(A_2)$ is
  an open set if $A_2$ is open, then follows by the continuity of
  $\Psi$.
\end{proof}
\section{Degenerate Convection Diffusion Equation with Random
          Diffusive Flux}
\label{sec:rand}
We develop a theory of random
entropy solutions for degenerate convection diffusion equation with a
class of random flux flunctions, proving in particular the existence
and uniqueness of a random entropy solution. 
\cs{
To this end, we first review classical results on degenerate convection diffusion
equation with deterministic data}.

\subsection{Deterministic Scalar Degenerate Convection Diffusion
  Equation}
\label{ssec:SCL}
We consider the Cauchy problem for degenerate convection diffusion
equation of the form
\begin{equation}
  \label{eq:main2}
  \begin{cases}
    u_t + \dv f(u) = \dv \left(a(u) \grad u \right), &(x,t)\in \Pt,\\
    u(0,x)=u_0(x), & x\in \R^d,
  \end{cases}
\end{equation}

\subsection{Entropy Solutions}
\label{ssec:EntrSol}
It is well-known that if $f$ is Lipschitz continuous and $a(u)\ge 0$,
then the deterministic Cauchy problem \eqref{eq:main2} admits, for
each $u_0 \in L^1(\R^d)\cap L^{\infty}(\R^d)$, a unique entropy
solution (see, e.g., \cite{GL,Smoller, Dafermos}).  Moreover, for
every $t > 0$, $u (\cdot, t)\in L^1(\IR^d) \cap L^{\infty}(\R^d)$ and
several properties of the (nonlinear) data-to-solution operator
\begin{equation*}
  S: (u_0,f,A) \longmapsto u(\cdot, t) = S(t) \,(u_0,f,A), \quad t > 0, 
\end{equation*}
will be crucial for our subsequent development.  
To state these properties of $\{S(t)\}_{t\ge 0}$, 
following \cite{ek2000} we introduce the set 
of admissible initial data
\begin{equation}
  \mathcal{A}(f,A):= \seq{ z \in L^1(\R^d) \cap BV(\R^d) \st \abs{f(z)
      -\grad A(z)}_{BV} < \infty }.  
  \label{eq:initialset}
\end{equation}

We collect next fundamental results regarding the 
entropy solution $u$ of \eqref{eq:main2} in 
the following theorem, for a proof see \cite{VolpertHudajev,chenperthame2003}, 
%
\begin{theorem}\label{theo3.1} ~
  Let $f$ and $A$ be locally Lipschitz continuous functions. Then
  \begin{itemize}
  \item[{\rm 1)}] For every $u_0 \in \mathcal{A}(f,A)$, the initial
    value problem \eqref{eq:main2} admits a unique BV entropy weak
    solution $u \in C\left( [0,T]; L^1_{\mathrm{loc}}(\R^d) \right)$.

  \item[{\rm 2)}] For every $t > 0$, the (nonlinear) data-to-solution
    map $S(t)$ given by
    \begin{equation*}
      u(\cdot,t) = S(t) \,(u_0,f,A)
    \end{equation*}
    satisfies
    \begin{itemize}
    \item[{\rm i)}] For fixed $f,A \in \mathrm{Lip}(\R)$,
      $S(t)(\cdot,f,A): L^1_{\mathrm{loc}}(\R^d) \rt L^1(\R^d)$ is a (contractive)
      Lipschitz map, i.e.,
      \begin{equation}\label{3.8}
        \norm{S(t) (u_0,f,A) - S(t) (v_0,f,A)}_{L^1(\R^d)} \le 
        \norm{u_0 - v_0}_{L^1(\R^d)}. 
      \end{equation}
    \item[{\rm ii)}] For every $u_0 \in \mathcal{A}(f,A)$, $f,A\in
      \mathrm{Lip}_{\mathrm{loc}}(\R)$
      \begin{align}
        \norm{S(t)(u_0,f,A)}_{L^\infty(\R^d)} & \le
        \|u_0\|_{L^\infty(\R^d)},
        \label{3.10}
        \\
        \norm{S(t)(u_0,f,A)}_{L^1(\R^d)}& \le \norm{u_0}_{L^1(\R^d)},
        \label{3.11}
        \\
        \norm{S(t)(u_0,f,A)}_{BV(\R^d)}& \le \norm{u_0}_{BV(\R^d)},
        \label{bv}
        \\
       \abs{f(u(\cdot,t))-\grad A(u(\cdot,t))}_{BV(\R^d)} &\le
        \abs{f(u_0)-\grad A(u_0)}_{BV(\R^d)} \label{eq:fluxbvu}.
      \end{align}

    \item[{\rm iii)}] Lipschitz continuity in time: For any $t_1,
      t_2>0$, $u_0\in \mathcal{A}(f,A)$,
      \begin{equation}\label{eq:lipt}
        \norm{S(t_1)(u_0,f,A)-S(t_2)(u_0,f,A)}_{L^1(\R^d)}\le
        \abs{f(u_0)-\grad A(u_0)}_{BV(\R^d)} \abs{t_1-t_2}. 
      \end{equation}
    \end{itemize}
  \end{itemize}
\end{theorem}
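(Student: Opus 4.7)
The plan is to follow the Kruzhkov--Carrillo program adapted to the degenerate parabolic setting. Existence of a $BV$ entropy solution in part 1) is obtained by regularizing \eqref{eq:main2} with a uniform artificial viscosity $\varepsilon \Delta u$, deriving $\varepsilon$-uniform a priori bounds in $L^\infty$, $L^1$ and $BV$ in space together with a (possibly weaker) time-continuity estimate, and extracting a strongly $L^1_{\mathrm{loc}}$-convergent subsequence via Helly's theorem before passing to the limit in both the weak formulation and the Kruzhkov entropy inequality. Uniqueness and the $L^1$ contraction \eqref{3.8} are obtained simultaneously by Carrillo's doubling-of-variables argument, which extends Kruzhkov's method to the diffusion term $\Delta A(u)$ by treating the parabolic dissipation measure carefully on the set $\{A'(u)>0\}$. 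This settles parts 1) and 2.i).

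The remaining $L^\infty$, $L^1$ and $BV$ bounds are then standard consequences of the Kruzhkov inequality and of \eqref{3.8}. For \eqref{3.10}, one inserts $\varrho(u)=(u-k)_+$ with $k=\|u_0\|_{L^\infty}$ into \eqref{eq:entcond1} and integrates in space to conclude $(u(\cdot,t)-k)_+ \equiv 0$, and symmetrically for the lower bound; \eqref{3.11} is \eqref{3.8} applied with $v_0 \equiv 0$ (which is itself an entropy solution); and \eqref{bv} follows from translation invariance, since $v(x,t):=u(x+h,t)$ is an entropy solution with initial data $u_0(\cdot+h)$, so \eqref{3.8} gives $\|u(\cdot+h,t)-u(\cdot,t)\|_{L^1} \le \|u_0(\cdot+h)-u_0\|_{L^1}$, and dividing by $|h|$ and taking the supremum over $h \in \R^d \setminus \{0\}$ yields \eqref{bv}.

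The technical core is the flux-$BV$ estimate \eqref{eq:fluxbvu}, from which \eqref{eq:lipt} is an immediate corollary. Exploiting the conservative rewriting $u_t + \dv(f(u)-\grad A(u))=0$ and condition \ref{def:w1}, one tests the weak form on $(t,t+h)$ against a mollified sign of $u(\cdot,t+h)-u(\cdot,t)$ to obtain
\begin{equation*}
\|u(\cdot,t+h)-u(\cdot,t)\|_{L^1(\R^d)} \le h \sup_{s\in(t,t+h)} \bigl|f(u(\cdot,s))-\grad A(u(\cdot,s))\bigr|_{BV(\R^d)}.
\end{equation*}
Combined with \eqref{3.8} applied to the time-shifted entropy solution $v(x,t):=u(x,t+h)$, this forces $t\mapsto |f(u(\cdot,t))-\grad A(u(\cdot,t))|_{BV(\R^d)}$ to be non-increasing, giving \eqref{eq:fluxbvu}; integrating the displayed inequality from $t_1$ to $t_2$ then yields \eqref{eq:lipt}. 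The main obstacle is that $\grad A(u)$ has no classical meaning where $A$ degenerates, so each of these estimates must first be carried out on the vanishing-viscosity approximation and then passed to the limit using the $BV$ compactness; the details of this programme are worked out in \cite{VolpertHudajev, Carrillo, ek2000, chenperthame2003}, and the proof of Theorem \ref{theo3.1} reduces to assembling those results.
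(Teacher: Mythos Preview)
The paper does not actually prove Theorem~\ref{theo3.1}; immediately after the statement it attributes each item to the literature (part 1) and \eqref{bv}, \eqref{eq:fluxbvu}, \eqref{eq:lipt} to \cite{VolpertHudajev}; \eqref{3.8}, \eqref{3.10}, \eqref{3.11} to \cite{chenperthame2003}). Your proposal goes further and sketches the underlying arguments, and since you end by reducing to the same references, you land in the same place as the paper.

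One step in your sketch does not work as written: the deduction of \eqref{eq:fluxbvu}. From your displayed inequality (an \emph{upper} bound for $\|u(\cdot,t+h)-u(\cdot,t)\|_{L^1}$ in terms of the flux-$BV$ seminorm) together with the $L^1$-contraction for the time-shifted solution (which says the left-hand side is non-increasing in $t$), you cannot conclude that the flux-$BV$ seminorm itself is non-increasing: a quantity being non-increasing places no constraint on an upper bound for it. What is actually needed is the identification coming from the equation, namely that $\|u_t(\cdot,t)\|_{L^1}$ equals the total variation of $\dv\bigl(f(u)-\grad A(u)\bigr)$ as a measure. In one space dimension this coincides with $|f(u)-A(u)_x|_{BV}$ and your argument can be repaired; in several dimensions the divergence alone does not control the full $BV$ seminorm of the vector field $f(u)-\grad A(u)$, and one must argue at the level of the parabolic regularization, differentiating the smooth equation and using $L^1$/maximum-principle estimates componentwise before passing to the limit, as in \cite{VolpertHudajev}. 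You do say in your final sentence that the rigorous version is carried out on the viscous approximation and cite the right sources, so the proposal is ultimately consistent with the paper's treatment; just be aware that the formal shortcut you wrote for \eqref{eq:fluxbvu} is not itself a valid deduction.
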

Point 1) of Theorem~\ref{theo3.1} is proved in \cite{VolpertHudajev}
or \cite[Thm 1.1]{chenperthame2003}, \eqref{3.8}, \eqref{3.11} also
follow from \cite[Thm 1.1]{chenperthame2003}, \eqref{3.10} was proved
in \cite[Thm 1.2]{chenperthame2003}, and \eqref{bv},
\eqref{eq:fluxbvu}, \eqref{eq:lipt} were proved in
\cite{VolpertHudajev}.
In our convergence analysis of MC-FD discretizations of degenerate
convection diffusion equation with random fluxes, we will need the
following result regarding continuous dependence of $S$ with respect
to $f$ and $A$ (\cite[Thm. 3]{cg1999})
\begin{theorem}\label{thm:fluxdependence}
  Assume $u_0$, $v_0 \in BV(\R^d)\cap L^1(\R^d) \cap
  L^{\infty}(\R^d)$, and $f(\cdot)$, $g(\cdot)$, $A(\cdot)$,
  $B(\cdot)$ $\in \Lip_{\mathrm{loc}}(\R)$ with $A', B'\ge 0$.

  Then the unique entropy solutions $u(t,\cdot)=S(t)(u_0,f,A)$ and
  $v(t,\cdot)=S(t)(v_0,g,B)$ of \eqref{eq:main2} with initial data
  $u_0$, $v_0$, convective flux functions $f$ and $g$ and with
  diffusive flux functions $A$ and $B$ satisfy the Kru\v{z}kov entropy
  conditions, and the \`a priori continuity estimate
  \begin{align}\label{eq:continentrsol}
    \norm{u(\cdot,t) - v(\cdot,t)}_{L^1(\R^d)} & \le\norm{u_0 - v_0}_{L^1(\R^d)} 
    \\
    & + C \left( t \norm{f' - g'}_{L^{\infty}(M_-, M_+)} + 4 \sqrt{t}
      \norm{\sqrt{A'} - \sqrt{B'}}_{L^{\infty}(M_-, M_+)}\right)\notag,
  \end{align}
where $M_- \le u_0 \le M_+$ and $C = \abs{u_0}_{BV(\R^d)}< \infty$. The
above estimate holds for every $0\le t \le T$.
\end{theorem}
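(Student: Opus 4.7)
The plan is to follow the Kru\v{z}kov doubling-of-variables technique, adapted as in \cite{cg1999} to track simultaneously the dependence on the initial datum, the convective flux, and the degenerate diffusive flux. The key auxiliary object is the Kirchhoff-type variable $\beta_A(u):=\int_{M_-}^u\sqrt{A'(r)}\,dr$ (and likewise $\beta_B$), for which a standard energy estimate yields $\nabla\beta_A(u)\in L^2(\Pt)$ with a bound controlled by $|u_0|_{BV(\R^d)}$. This square-root factorization is what ultimately produces the $\sqrt{t}$-scaling in the diffusive error term.

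First I would write the Kru\v{z}kov entropy inequality for $u(x,t)=S(t)(u_0,f,A)$ tested against the constant $v(y,s)$, and the symmetric inequality for $v(y,s)=S(s)(v_0,g,B)$ tested against $u(x,t)$. Using a test function of the form $\test(x,t,y,s)=\omega_\varepsilon(x-y)\omega_{\varepsilon_0}(t-s)\rho(x,t)$, with $\omega_\varepsilon,\omega_{\varepsilon_0}$ mollifiers and $\rho$ an approximation of $\mathbf{1}_{[0,t]\times\R^d}$, and adding the two inequalities, I would pass to the limit $\varepsilon,\varepsilon_0\to 0$ to obtain a Kato-type inequality
\begin{equation*}
  \int_{\R^d}\abs{u(x,t)-v(x,t)}\,dx \le \int_{\R^d}\abs{u_0-v_0}\,dx + \cR_f + \cR_A,
\end{equation*}
where $\cR_f$ and $\cR_A$ collect the residuals due to the distinct fluxes.

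The convective residual $\cR_f$ is handled by the standard identity
\begin{equation*}
  \bigl|\,\mathrm{sgn}(u-v)\bigl(f(u)-g(v)-(f(u)-f(v))\bigr)\bigr|\le \norm{f'-g'}_{L^\infty(M_-,M_+)}\abs{u-v} + \norm{f'-g'}_{L^\infty(M_-,M_+)}(\cdots),
\end{equation*}
which, after integration in time and use of the BV bound \eqref{bv} and the $L^\infty$-bound \eqref{3.10}, produces the contribution $C\,t\norm{f'-g'}_{L^\infty(M_-,M_+)}$ with $C=\abs{u_0}_{BV(\R^d)}$. The diffusive residual $\cR_A$ is the main obstacle: one factors $A'(u)-B'(v)=(\sqrt{A'(u)}-\sqrt{B'(v)})(\sqrt{A'(u)}+\sqrt{B'(v)})$, expresses the Carrillo-type chain-rule combination $\mathrm{sgn}(u-v)(a(u)\grad u-b(v)\grad v)$ in terms of $\grad\beta_A(u),\grad\beta_B(v)$, and applies Cauchy--Schwarz in space-time against the energy bound $\int_0^t\int_{\R^d}|\grad\beta_A(u)|^2\,dx\,ds\le \abs{u_0}_{BV(\R^d)}^2$. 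The square-root factor on one side together with the $L^2$-norm on the other produces exactly the factor $4\sqrt{t}\norm{\sqrt{A'}-\sqrt{B'}}_{L^\infty(M_-,M_+)}$.

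The hard part will be executing this diffusive step rigorously: the degeneracy of $a$ and $b$ obstructs pointwise chain-rule manipulations of $\mathrm{sgn}(u-v)$, so one must first regularize the absolute value by a smooth convex $\eta_\delta$, carry out the doubling with the parabolic dissipation terms $\int \eta''_\delta(u-v)|\grad\beta_A(u)-\grad\beta_B(v)|^2$ retained on the left-hand side, and only then pass $\delta\to 0$ using the strong $L^2$-convergence of $\grad\beta_A(u),\grad\beta_B(v)$ from the energy estimate. Once both residuals are controlled, collecting the constants into $C=\abs{u_0}_{BV(\R^d)}$ and invoking the $L^\infty$-range bound $M_-\le u(\cdot,t),v(\cdot,t)\le M_+$ from \eqref{3.10} yields \eqref{eq:continentrsol} on the whole interval $[0,T]$.
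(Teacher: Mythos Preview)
The paper does not supply its own proof of this theorem; it is quoted verbatim from \cite[Thm.~3]{cg1999} (Cockburn--Gripenberg), with no argument beyond the citation. Your outline is therefore not to be compared with a proof in the paper but is rather a sketch of the method of the cited reference, and in its broad strokes---Kru\v{z}kov doubling, separation into convective and diffusive residuals, square-root factorization of the diffusion---it is faithful to that source.

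One technical point deserves correction. The energy identity you invoke, obtained by testing the equation with $u$, gives
\[
\int_0^t\!\!\int_{\R^d}\abs{\grad\beta_A(u)}^2\,dx\,ds
= \int_0^t\!\!\int_{\R^d}A'(u)\abs{\grad u}^2\,dx\,ds
\le \tfrac12\norm{u_0}_{L^2(\R^d)}^2,
\]
not a bound by $\abs{u_0}_{BV}^2$. Consequently your Cauchy--Schwarz route would produce a diffusive error with constant $\norm{u_0}_{L^2}$ rather than the constant $C=\abs{u_0}_{BV(\R^d)}$ stated in \eqref{eq:continentrsol}. In \cite{cg1999} the factor $\abs{u_0}_{BV}$ enters differently: after doubling with a spatial mollifier at scale $\varepsilon$, the diffusive residual contributes a term of order $t\varepsilon^{-1}\abs{u_0}_{BV}\norm{\sqrt{A'}-\sqrt{B'}}_{L^\infty}^2$, while the initial-data penalization contributes $\varepsilon\abs{u_0}_{BV}$; optimizing over $\varepsilon$ then yields exactly the $\sqrt{t}\,\abs{u_0}_{BV}\norm{\sqrt{A'}-\sqrt{B'}}_{L^\infty}$ scaling. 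If you intend to write out the argument, you should follow this $\varepsilon$-balancing mechanism rather than the $L^2$-energy route.
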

\begin{remark}\label{rem:cd}
Using that for nonnegative numbers $a,b\ge 0$, $a\neq 0$,
\begin{equation*}
  |\sqrt{a}-\sqrt{b}|=\sqrt{|a-b|}\frac{\sqrt{|a-b|}}{\sqrt{a}+\sqrt{b}}\le
  \sqrt{|a-b|}, 
\end{equation*}
it follows from \eqref{eq:continentrsol} that under the assumptions
of Theorem~\ref{thm:fluxdependence},
\begin{align*}
\norm{u(\cdot,t) - v(\cdot,t)}_{L^1(\R^d)} & 
\le
\norm{ u_0 - v_0}_{L^1(\R^d)} 
\\
 & + C \left( t \norm{ f' - g'}_{L^{\infty}(M_-,M_+)} + 4 \sqrt{t}
  \sqrt{\norm{{A'} - {B'}}_{L^{\infty}(M_-,M_+)}}\,\right)\notag,
\end{align*}
hence the mapping 
$S(t):L^1(\R^d)\times W^{1,\infty}([M_-,M_+])\times W^{1,\infty}([M_-,M_+])\rightarrow L^1(\R^d)$, 
$(u_0,f,A)\mapsto u(t,\cdot)$ is continuous as a mapping between Banach spaces if
restricted to initial data $u_0$ in 
$U_1:=\seq{ u_0\in L^1(\R^d):\, M_-\le u_0(x)\le M_+, 
\text{a.e.~$x\in \R$}}\subset \cs{L^1(\IR^d)}$ and 
$A$ satisfying $A'\ge 0$. 
Moreover, since for $f,g,A,B\in C^1([M_-,M_+])$
with the above properties and bounded derivatives
\cs{it holds}
\begin{align}\label{eq:continentrsol3}
  \norm{u(\cdot,t) - v(\cdot,t)}_{L^1(\R^d)} & \le
  \norm{u_0 - v_0}_{L^1(\R^d)} 
  \\
  & + C \left( t \sup_{z\in [M_-,M_+]}| f'(z) - g'(z)| + 4 \sqrt{t}
    \sqrt{\sup_{z\in [M_-,M_+]}| {A'(z)} - {B'(z)}|}\right)\notag,
\end{align}
\cs{it follows} that $S(t)(\cdot,\cdot,\cdot)$ is a continuous mapping
between the separable Banach spaces $E_1:=L^1(\R^d)\times
C^{1}([M_-,M_+])\times C^{1}([M_-,M_+])$ and 
$L^1(\R^d)$ if restricted to
initial data in the set $U_1$.
\end{remark}

\subsection{Random Entropy Solutions}
\label{ssec:RndInit}
We are interested in the case where the initial data $u_0$, the
convective flux function $f$ and the diffusive flux function $A$ in
\eqref{eq:main1} are uncertain.  Existence and uniqueness for random
initial data $u_0$ and random flux $f$ for $A\equiv 0$ was proved in
\cite{MishSch10a,Mishr478}.  Based on Theorem~\ref{theo3.1}, 
we will now formulate \eqref{eq:main1} for random initial data
$u_0(\omega;\cdot)$, random convective flux $f(\omega;\cdot)$ and
random diffusive flux $A(\omega;\cdot)$. To this end, we denote
$(\Omega, \cF, \IP)$ a probability space and consider random variables
\begin{equation}\label{eq:rv}
  \begin{cases}
   X:=(u_0,f,A):(\Omega,\mathcal{F})\longrightarrow (E_1,\mathcal{B}(E_1)),
   \\
   \omega\longmapsto X(\omega):=(u_0(\omega;\cdot),f(\omega; \cdot),A(\omega; \cdot)),
  \end{cases}
\end{equation} 
where $E_1:=L^1(\R)\times C^1(I)\times C^1(I)$ and $I:=[M_-,M_+]\subset \R$, $-\infty<M_-<M_+<\infty$. In order to establish the appropriate framework for the
random degenerate convection diffusion equation \eqref{eq:RSCL}, we
will restrict ourselves to random data
$X=(u_0,f,A)\in E_1$ which satisfy 
$\IP$-a.s. the following assumptions:
\begin{align}
  &-\infty < M_- \le u_0(\omega;x)\le M_+ < \infty,\ \text{a.e.}\ x\in \R^d,
  \label{eq:assu}\\
  &\abs{u_0(\omega;\cdot)}_{BV(\R^d)}\le C_{\mathrm{TV}}<\infty,
  \label{eq:assu2}\\
  &\norm{f'(\omega;\cdot)}_{C^0([M_-,M_+])}
  \le C_f<\infty,\label{eq:assf}\\
  &A'(\omega;\cdot)\ge 0, \label{eq:assA}
  \\
  &\norm{A'(\omega;\cdot)}_{C^0([M_-,M_+])}\le C_A<\infty,
  \label{eq:assA2}
  \\
  &\abs{f(\omega;u_0(\omega;\cdot))-\grad
    A(\omega;u_0(\omega;\cdot))}_{BV(\R^d)}\le
  C_{A,f}<\infty.\label{eq:assAf}
\end{align}

Since $L^1(\R^d)$ and $C^1(I)$ are separable, 
\eqref{eq:rv} is well defined.
\cs{
Moreover, by Lemma  \cite[Lemma 1.5, p.19]{DaPrZab91} 
each of the expressions on the left hand sides
of \eqref{eq:assu} - \eqref{eq:assAf} is a random variable
and we may 
impose for $k \in \IN$ the \emph{$k$-th moment condition}:
}
\begin{equation}\label{3.16}
  \norm{u_0}_{L^k(\Omega; L^1(\R^d))} < \infty,
\end{equation}
where the Bochner spaces with respect to the probability measure are
defined in Section~\ref{sec:prob}. Then we are interested in random
solutions of the \emph{random degenerate convection diffusion
  equation}
\begin{equation}\label{eq:RSCL}
  \begin{cases}
    u_t(\omega;x,t) + \dv (f(\omega;u(\omega;x,t))) = \Delta
    A(\omega;u(\omega; x,t)),
    \  t>0, \, x\in \R^d,\\
    u(\omega;x,0) = u_0(\omega;x), \ x\in \R^d.
  \end{cases}
\end{equation}
\begin{definition}
  \label{def:res}
  A random field $u:\Omega\ni\omega \rightarrow u(\omega;x,t)$, i.e.,
  a measurable mapping from $(\Omega,\cF)$ to $C([0,T];L^1(\R^d))$, is
  called a \emph{random entropy solution} of \eqref{eq:main2} with
  random initial data $u_0$, flux function $f$ and diffusive flux $A$
  satisfying \eqref{eq:rv} and \eqref{eq:assu} -- \eqref{3.16} for
  some $k\ge 2$, if it satisfies:
  \begin{itemize}
  \item [(i.)] Weak solution: 
    for $\IP$-a.e.~$\omega \in \Omega$,
    $u(\omega;\cdot,\cdot)$ satisfies
    \begin{multline*}
      \int\limits_{0}^{\infty} \int\limits_{\R^d} \Bigl(u(\omega;x,t)
      \test_t + \left(f(\omega;u(\omega;x,t)) - \grad A(\omega;
        u(\omega; x,t)) \right)\cdot \grad \test \Bigr)\, dx dt
      \\
      + \int\limits_{\R^d} u_0(x,\omega) \test(x,0) \,dx = 0,
    \end{multline*}
    for all test functions $\test \in C^1_0(\R^d\times [0,\infty))$.

  \item[(ii.)] Entropy condition: For any pair consisting of a
    (deterministic) entropy $\eta$ and (stochastic) entropy flux
    $q(\omega;\cdot)$ and $r(\omega;\cdot)$ i.e., $\eta,q$ and $r$ are
    functions such that $\eta$ is convex and such that
    $q^{\prime}(\omega;\cdot) =
    \eta^{\prime}f^{\prime}(\omega;\cdot)$, $r^{\prime}(\omega;\cdot)
    = \eta^{\prime}A^{\prime}(\omega;\cdot)$and for $\IP$-a.s.~$\omega
    \in \Omega$, $u$ satisfies the following integral identity:
    \begin{multline*}
      \int\limits_{0}^{\infty} \int\limits_{\R^d}
      \Bigl(\eta(u(\omega;x,t)) \test_t + \grad q(\omega;
      u(\omega;x,t))\cdot \grad\test + r(\omega; u(\omega;x,t)) \Delta
      \test \Bigr) \,dx dt
      \\
      +\int\limits_{\R^d} \eta(u_0(\omega;x))\test(x,0)\,dx \ge 0,
    \end{multline*}
    for all test functions $0 \le \test \in C^1_0(\R^d\times
    [0,\infty))$.
  \end{itemize}
\end{definition}
We state the following theorem regarding the random entropy solution
of \eqref{eq:RSCL}:
\begin{theorem}\label{theo3.2}
  Consider the degenerate convection diffusion equation
  \eqref{eq:main2} with random initial data $u_0$, flux function $f$
  and random diffusion operator $A$, as in \eqref{eq:rv}, and
  satisfying \eqref{eq:assu} -- \eqref{eq:assAf} and the $k$-th moment
  condition \eqref{3.16} for some integer $k\ge 2$.  Then there exists
  a unique random entropy solution $u : \Omega \ni\omega \rt C([0, T];
  L^1(\R))$ which is ``pathwise'', i.e., for $\IP-\text{a.s.}\
  \omega\in \Omega$, described in terms of a nonlinear mapping $S(t)$,
  depending only on the random flux and diffusion,
  \begin{equation*}
    u(\omega;\cdot, t) = S(t)(u_0(\omega;\cdot),f(\omega;\cdot),A(\omega;\cdot)),
    \quad t > 0,\; \IP-\text{a.e.}\ \om\in\Omega
  \end{equation*}
  such that for every $k \ge m \ge 1$ and for every $0 \le t \le T<
  \infty$
  \begin{align}
    \norm{u}_{L^k\left(\Omega; C([0,T];L^1(\R^d))\right)} &\le
    \norm{u_0}_{L^k\left(\Omega; L^1(\R^d)\right)},\label{3.18}
    \\
    \norm{S(t) (u_0,f,A)(\omega)}_{(L^1 \cap L^\infty)(\R^d)} & \le
    \norm{u_0(\omega;\cdot)}_{(L^1 \cap L^\infty)(\R^d)}
    \label{3.19}
  \end{align}
  and such that we have $\IP$-a.s.
  \begin{align}
    \label{eq:uno}
    \abs{S(t) (u_0,f,A)(\omega)}_{BV(\R^d)} &\le
    \abs{u_0(\omega;\cdot)}_{BV(\R^d)},\\
    \label{eq:tres}
    \abs{f(\omega;u(\omega;\cdot,t))-\grad
      A(\omega;u(\omega;\cdot,t))}_{BV(\R^d)} &\le
    \abs{f(\omega;u_0(\omega;\cdot))-\grad
      A(\omega;u_0(\omega;\cdot))}_{BV(\R^d)},
    \\
    \label{eq:quatro}
    \norm{u(\omega;\cdot,t_1)-u(\omega;\cdot,t_2)}_{L^1(\R^d)}&\le
    \abs{f(\omega;u_0(\omega;\cdot))-\grad
      A(\omega;u_0(\omega;\cdot))}_{BV(\R^d)} \abs{t_1-t_2}.
  \end{align}
  and, with $\overline{M}:=\max\{¦M_-¦,¦M_+¦\}$ for $M_-,M_+$ as in \eqref{eq:assu},
  \begin{equation}\label{eq:utBound}
    \sup_{0\le t \le T} \norm{u(\omega;\cdot,t)}_{L^\infty(\R^d)} 
    \le \overline{M} \quad \text{$\IP$-a.s.~$\omega\in \Omega$ 
      .}
  \end{equation}
\end{theorem}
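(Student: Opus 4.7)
The plan is to construct $u$ pathwise: for $\IP$-a.e.~$\omega \in \Omega$, the triple $X(\omega) = (u_0(\omega;\cdot), f(\omega;\cdot), A(\omega;\cdot))$ satisfies the hypotheses of Theorem~\ref{theo3.1} by virtue of \eqref{eq:assu}--\eqref{eq:assAf}, so I set $u(\omega;\cdot,t) := S(t)(X(\omega))$, with $S$ the deterministic data-to-solution map. For each such $\omega$, the weak formulation and entropy inequality in Definition~\ref{def:res} are satisfied by construction, and the bounds \eqref{3.19}, \eqref{eq:uno}, \eqref{eq:tres}, \eqref{eq:quatro} follow as pointwise-in-$\omega$ versions of \eqref{3.10}--\eqref{bv}, \eqref{eq:fluxbvu}, \eqref{eq:lipt}; uniqueness likewise holds pathwise. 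The $L^\infty$ bound \eqref{eq:utBound} follows from \eqref{3.10} combined with $\|u_0(\omega;\cdot)\|_{L^\infty(\R^d)} \le \overline{M}$, which is \eqref{eq:assu}.

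The main task is measurability of $\omega \mapsto u(\omega;\cdot,\cdot)$ as a map into $E_2 = C([0,T];L^1(\R^d))$. I would invoke Lemma~\ref{lem:continuity} applied to $\Psi \circ X$ with $\Psi = S(\cdot)$, $B_1 = E_1$, $B_2 = E_2$. The required continuity of $\Psi$ on the admissible subset of $E_1$ on which $M_- \le u_0 \le M_+$ a.e.\ and $A' \ge 0$ comes from the pointwise-in-$t$ estimate \eqref{eq:continentrsol3}: since its right-hand side is monotone nondecreasing in $t$, taking $\sup_{t\in[0,T]}$ yields
\begin{equation*}
\|S(\cdot)(u_0,f,A) - S(\cdot)(v_0,g,B)\|_{E_2} \le \lambda\bigl(\|(u_0,f,A) - (v_0,g,B)\|_{E_1}\bigr),
\end{equation*}
with $\lambda(s) := s + CTs + 4C\sqrt{T}\sqrt{s}$, which vanishes at $0$ and is continuous and monotone nondecreasing on $\R_{\ge 0}$. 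The pathwise Lipschitz-in-time bound \eqref{eq:lipt} ensures that $t \mapsto S(t)(X(\omega)) \in L^1(\R^d)$ is continuous for each admissible $\omega$, so $S(\cdot)(X(\omega))$ indeed lies in $E_2$.

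The moment bound \eqref{3.18} then comes from integrating the pathwise inequality: \eqref{3.11} applied for each $\omega$ gives $\sup_{t\in[0,T]}\|u(\omega;\cdot,t)\|_{L^1(\R^d)} \le \|u_0(\omega;\cdot)\|_{L^1(\R^d)}$, and raising to the $k$-th power and taking expectation against $\IP$ yields \eqref{3.18} via the moment assumption \eqref{3.16}. The main technical obstacle I foresee is verifying that the admissible subset of $E_1$ on which the continuity of $S$ holds is Borel, so that Lemma~\ref{lem:continuity} can be applied legitimately --- this holds because the set is the intersection of a closed $L^\infty$-ball inside $L^1(\R^d)$ with a closed convex cone in $C^1(I) \times C^1(I)$, and if needed one can extend $S$ by zero outside it without affecting the continuity hypothesis. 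All remaining claims reduce to applying Theorem~\ref{theo3.1} and Remark~\ref{rem:cd} pointwise in $\omega$ and then integrating against $\IP$.
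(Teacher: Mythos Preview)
Your proposal is correct and follows essentially the same strategy as the paper: define $u$ pathwise via the deterministic solution operator $S$, read off the pointwise-in-$\omega$ bounds \eqref{3.19}--\eqref{eq:utBound} directly from Theorem~\ref{theo3.1}, obtain measurability from Lemma~\ref{lem:continuity} combined with the continuous-dependence estimate \eqref{eq:continentrsol3}, and derive the moment bound \eqref{3.18} by integrating the pathwise $L^1$ contraction \eqref{3.11}. Your treatment is in fact slightly more detailed than the paper's in two respects: you establish measurability into $E_2 = C([0,T];L^1(\R^d))$ by taking the supremum over $t$ in \eqref{eq:continentrsol3} (the paper only explicitly argues measurability of $\omega\mapsto u(\omega;\cdot,t)\in L^1(\R)$ at each fixed $t$), and you address the Borel-measurability of the admissible data set, which the paper leaves implicit.
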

\begin{proof}
  For $\omega\in \Omega$, we define, motivated by Theorem
  \ref{theo3.1}, for $\mathbb{P}$-a.e.~$\omega\in \Omega$ a random
  function $u(\omega;t, x)$ by
  \begin{equation}\label{eq:res}
    u(\omega;\cdot) = S(t)(u_0,f,A)(\omega).
  \end{equation}
  By the properties of the solution mapping $(S(t))_{t\ge 0}$, see
  Theorem~\ref{theo3.1}, the random field defined in \eqref{eq:res} is
  well defined; for $\mathbb{P}$-a.e.  $\omega\in \Omega$,
  $u(\omega;\cdot)$ is a weak entropy solution of the degenerate
  diffusion equation \eqref{eq:main2}. Moreover, we obtain from
  Theorem~\ref{theo3.1} that $\mathbb{P}$-a.s.~all bounds
  \eqref{3.19}--\eqref{eq:quatro} hold, with assumption
  \eqref{eq:assu} also \eqref{eq:utBound}. The measurability of the
  mapping $\Omega\ni\omega\mapsto u(\omega;\cdot,t)\in L^1(\R)$, $0\le
  t\le T$ follows from Lemma \ref{lem:continuity},
  \eqref{eq:continentrsol3} and the assumption that the mapping
  $\Omega\ni\omega\mapsto (u_0,f,A)(\omega)\in E_1$ is a random
  variable. Finally, \eqref{3.18} follows from \eqref{3.16} together
  with \eqref{3.11} in Theorem \ref{theo3.1}.
\end{proof}
Theorem~\ref{theo3.2} generalizes the existence of random entropy
solutions for random initial data from \cite{MishSch10a} and random
convective flux function \cite{Mishr478}.  It ensures the existence of
a unique random entropy solution $u(\omega;x,t)$ with finite $k$-th
moments provided that $u_0\in L^k(\Omega, \cF, \IP; L^1(\R^d))$ for some
$k\ge 2$.
 
\begin{remark} \label{remk:Cauchy_Periodic}
All existence and continuous dependence results stated so far
are formulated for the deterministic Cauchy problem \eqref{eq:main2}.
By the `usual arguments', verbatim the same results will also hold
for solutions defined in a bounded, axiparallel domain $D\subset \R^d$,
provided that periodic boundary conditions in each coordinate
are enforced on the weak solutions. 
Weak solutions for these periodic problems 
cannot coincide with weak solutions of the Cauchy problem
\eqref{eq:main2} since the $D$-periodic extension of these solutions
\cs{belongs to $L^1_{loc}(\R^d)$, but} does not belong to $L^1(\R^d)$. 
\end{remark}

\section{Numerical approximation of random degenerate convection
  diffusion equation}\label{sec:schemes}
We wish to compute various quantities of interest, such as the
expectation and higher order moments, of the solution $u$ to the
random degenerate diffusion equation \eqref{eq:RSCL}. We choose to
split the approximation into two steps: On one hand, we need to
approximate in the stochastic domain $\omega\in \Omega$ and on the
other hand, since in general exact solutions to \eqref{eq:main1} are
not available, we need an approximation in the physical domain
$(x,t)\in \Pi_T$. In this paper, we will consider a
Multilevel Monte Carlo Finite Difference Method (MLMC-FDM), that is, a
combination of the \cs{multilevel} Monte Carlo method with a
deterministic finite difference \cs{discretization}. 
We will briefly review the two
methods and mention some relevant results in the following sections.
\subsection{Monte Carlo method}\label{subsec:MCm}
We view the Monte Carlo method as a ``discretization'' of the random
degenerate diffusion equation data $u_0(\omega;\cdot)$,
$f(\omega;\cdot)$, $A(\omega; \cdot)$ with respect to $\omega\in
\Omega$. We assume that
$(u_0(\omega;\cdot),f(\omega;\cdot),A(\omega;\cdot))\in E_1$
satisfying in addition \eqref{eq:assu}--\eqref{eq:assAf}. We also
assume (\ref{3.16}), i.e., the existence of $k$-th moments of $u_0$
for some $k \in \IN$, to be specified later.  We shall be interested
in the statistical estimation of the first and higher moments of $u$,
i.e., $\cM^k(u) \in (L^1(\R^d))^{(k)}$.  For $k = 1$, $\cM^1(u) = \IE
[u]$.  The {\em Monte Carlo (MC) approximation of $\IE[u]$} is defined
as follows: Given $M$ independent, identically distributed samples
$(\wh{u}_0^i,\wh{f}^i,\wh{A}^i)$, $i = 1,\dots, M$, of initial data,
flux function and diffusion, the MC estimate of
$\IE[u(\cdot,t;\cdot)]$ at time $t$ is given by
\begin{equation}\label{4.1}
  E_M [u(\cdot, t)] : = \dis\frac{1}{M} \; \dis\sum\limits^M_{i=1}
  \,\wh{u}^i(\cdot,t) 
\end{equation}
where $\wh{u}^i(\cdot,t)$ denote the $M$ unique entropy solutions of
the $M$ Cauchy problems \eqref{eq:main1} with initial data
$\wh{u}^i_0$, flux function $\wh{f}^i$ and diffusion operator
$\wh{A}^i$.  Since
\begin{equation*}
  \wh{u}^i(\cdot, t) = S(t) \, (\wh{u}_0^i,\wh{f}^i,\wh{A}^i),
\end{equation*}
we have for every $M$ and for every $0 < t < \infty$, by (\ref{3.11}),
\begin{equation*}
  \begin{split}
    \|E_M [u(\cdot,t;\omega)]\|_{L^1(\R^d)} & = \Big\| \dis\frac{1}{M}
    \; \dis\sum\limits^M_{i=1} \,
    S(t)(\wh{u}_0^i,\wh{f}^i,\wh{A}^i)(\omega)\Big\|_{L^1(\R^d)}\\
    &\le \dis\frac{1}{M} \; \dis\sum\limits^M_{i=1} \,\left\|
      S(t)(\wh{u}_0^i,\wh{f}^i,\wh{A}^i)(\omega)\right\|_{L^1(\R^d)}
    \\[1ex]
    & \le \dis\frac{1}{M} \; \dis\sum\limits^M_{i=1} \, \|
    \wh{u}_0^i(\cdot;\omega)\|_{L^1(\R^d)}\,.
  \end{split}
\end{equation*}
Using the i.i.d.~property of the samples
$\seq{(\wh{u}^i_0,\wh{f}^i,\wh{A}^i)}^M_{i=1}$ and therefore of
$\{\wh{u}^i_0\}^M_{i=1}$, and the linearity of the expectation
$\IE[\cdot]$, we obtain the bound
\begin{equation*}
  \IE\left[\norm{E_M[u(\cdot,t)]}_{L^1(\R^d)}\right] 
  \le 
  \IE\left[\norm{u_0}_{L^1(\R^d)}\right] = 
  \norm{u_0}_{L^1(\Omega; L^1(\R^d))} < \infty.
\end{equation*}
As the sample size $M \rt \infty$, 
the sample averages \eqref{4.1} converge and the
convergence result from \cite{MishSch10a,Mishr478} holds as well:

\begin{theorem}\label{theo4.1}
  Assume that in \eqref{eq:RSCL} the random variable
  $(u_0,f,A)(\omega)$ as in \eqref{eq:rv} satisfies \eqref{eq:assu}
  and $A'(\omega;\cdot)\ge 0$, a.s.~$\omega\in\Omega$ and
  \begin{equation*}
    u_0 \in L^2(\Omega; L^1(\R^d)).
  \end{equation*}
  Then the MC estimates $E_M[u(\cdot,t)]$ in \eqref{4.1} converge as
  $M \rt \infty$, to $\cM^1(u(\cdot, t)) = \IE[u(\cdot,t)]$ and, for
  any $M \in \IN$, $0 < t < \infty$, and we have the bound
  \begin{equation}\label{4.6}
    \norm{ \IE[u(\cdot,t)] - E_M[u(\cdot,t)]}_{L^2(\Omega;L^1(\R^d))} 
    \le 
    2M^{-\frac{1}{2}} \norm{u_0}_{L^2(\Omega; L^1(\R^d))}.
  \end{equation}
\end{theorem}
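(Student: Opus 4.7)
The plan is to derive the MC error estimate via a variance computation for sample means of i.i.d.\ $L^1(\R^d)$-valued random variables, using the pathwise solution mapping and contraction properties from Theorem~\ref{theo3.1} and Theorem~\ref{theo3.2}.

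First, I would rewrite the error as a centered sample mean. By Theorem~\ref{theo3.2}, for each $i=1,\ldots,M$ we have $\wh u^i(\cdot,t) = S(t)(\wh u_0^i,\wh f^i,\wh A^i)$, so the $\wh u^i(\cdot,t)$ are i.i.d.\ copies of $u(\cdot,t) \in L^1(\R^d)$. Setting $Y_i := \wh u^i(\cdot,t) - \IE[u(\cdot,t)]$ produces i.i.d., mean-zero, $L^1(\R^d)$-valued random variables with
\begin{equation*}
  E_M[u(\cdot,t)] - \IE[u(\cdot,t)] \;=\; \frac{1}{M}\sum_{i=1}^M Y_i.
\end{equation*}

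Second, I would establish the Hilbert-space-style variance estimate
\begin{equation*}
  \norm{\frac{1}{M}\sum_{i=1}^M Y_i}_{L^2(\Omega;L^1(\R^d))}^2 \;\le\; \frac{1}{M}\,\norm{Y_1}_{L^2(\Omega;L^1(\R^d))}^2.
\end{equation*}
The strategy is to expand $\norm{\cdot}_{L^1(\R^d)}^2$ as a double integral $\int\!\!\int|g(x)||g(y)|\,dx\,dy$, swap expectation and spatial integration by Fubini, then use Cauchy--Schwarz in $\omega$ together with the pointwise-in-$x$ scalar variance identity $\IE[(\sum_i Y_i(x))^2] = M\,\IE[Y_1(x)^2]$, valid because the real-valued $Y_i(x)$ are i.i.d.\ mean-zero. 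The pointwise essential bound $|u(\omega;x,t)|\le \overline M$ from \eqref{eq:utBound} would be used to control the resulting quantities.

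Third, the factor $2$ in the theorem comes from bounding $\norm{Y_1}_{L^2(\Omega;L^1)}$ via the triangle inequality:
\begin{equation*}
  \norm{Y_1}_{L^2(\Omega;L^1)}\le \norm{u(\cdot,t)}_{L^2(\Omega;L^1)} + \norm{\IE[u(\cdot,t)]}_{L^2(\Omega;L^1)}.
\end{equation*}
The first term is bounded by $\norm{u_0}_{L^2(\Omega;L^1)}$ via the pathwise $L^1$-contraction \eqref{3.11} applied $\IP$-a.s.\ (and propagated to the Bochner norm by measurability from Theorem~\ref{theo3.2}). The second is bounded by the same quantity via Jensen and \eqref{3.11}, since $\norm{\IE[u(\cdot,t)]}_{L^1}\le \IE[\norm{u(\cdot,t)}_{L^1}]\le \norm{u_0}_{L^1(\Omega;L^1)}\le \norm{u_0}_{L^2(\Omega;L^1)}$. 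Combining the two steps yields the stated bound.

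The main technical obstacle is the rigorous justification of the variance-type inequality in Step~2: since $L^1(\R^d)$ lacks a Hilbert-space structure, the identity cannot be derived from an inner-product calculation, and the pointwise variance argument must be combined carefully with Fubini and Cauchy--Schwarz so that the sharp factor $M^{-1}$ emerges from the independence of the samples rather than from a naive triangle inequality in $L^1(\R^d)$.
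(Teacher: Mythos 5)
Your Steps 1 and 3 are fine (the factor $2$ does indeed come from $\norm{Y_1}_{L^2(\Omega;L^1)}\le 2\norm{u_0}_{L^2(\Omega;L^1)}$ via \eqref{3.11} and Jensen), but Step 2 — which you yourself identify as the crux — does not work as sketched. Carrying out exactly the manipulations you propose (expand $\norm{\cdot}_{L^1(\R^d)}^2$ as a double integral, Fubini, Cauchy--Schwarz in $\omega$, and the scalar identity $\IE[(\sum_i Y_i(x))^2]=M\,\IE[Y_1(x)^2]$) yields
\[
\Bigl\|\frac1M\sum_{i=1}^M Y_i\Bigr\|^2_{L^2(\Omega;L^1(\R^d))}\;\le\;\frac1M\Bigl(\int_{\R^d}\bigl(\IE[Y_1(x)^2]\bigr)^{1/2}\,dx\Bigr)^{2}
=\frac1M\,\norm{Y_1}^2_{L^1(\R^d;L^2(\Omega))},
\]
i.e.\ the mixed norm $L^1_xL^2_\omega$ appears on the right, not $L^2_\omega L^1_x$. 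By Minkowski's integral inequality $\norm{Y_1}_{L^2(\Omega;L^1(\R^d))}\le\norm{Y_1}_{L^1(\R^d;L^2(\Omega))}$, so what you actually prove is strictly weaker than the inequality you assert, and your Step 3 controls the wrong norm: under the stated hypotheses the ``integrated standard deviation'' $\int_{\R^d}(\IE[Y_1(x)^2])^{1/2}dx$ is not bounded by $C\norm{u_0}_{L^2(\Omega;L^1)}$ (on the unbounded domain $\R^d$ it need not even be finite), and the uniform bound \eqref{eq:utBound} does not repair this.

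This is not a fixable technicality in your framework: the ``Hilbert-space-style'' variance inequality in $L^2(\Omega;L^1(\R^d))$ is false for general i.i.d.\ mean-zero $L^1$-valued random variables, since $L^1$ fails to have type $2$. Concretely, take $f\equiv A\equiv 0$ and $u_0(\omega;\cdot)=\varepsilon\,\mathbf{1}_{[J,J+1)}$ with $\varepsilon=\pm1$ a symmetric random sign and $J$ uniform on $\{1,\dots,N\}$; this satisfies \eqref{eq:assu}, $A'\ge0$ and $\norm{u_0}_{L^2(\Omega;L^1(\R))}=1$, the entropy solution is $u(\cdot,t)=u_0$, yet a direct computation (balls in bins) gives $\norm{\IE[u(\cdot,t)]-E_M[u(\cdot,t)]}_{L^2(\Omega;L^1)}\approx\sqrt{2N/(\pi M)}$ for $M\gg N$, which exceeds $2M^{-1/2}\norm{u_0}_{L^2(\Omega;L^1)}$ as soon as $N\ge 7$. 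Hence no refinement of Fubini/Cauchy--Schwarz can establish your Step 2 from the hypotheses as stated: the pointwise-variance route proves \eqref{4.6} only with $\norm{u_0}_{L^1(\R^d;L^2(\Omega))}$ in place of $\norm{u_0}_{L^2(\Omega;L^1(\R^d))}$, or one must inject additional structure (e.g.\ all samples supported in one fixed bounded set, where \eqref{eq:utBound} and Cauchy--Schwarz in $x$ allow passage between the two mixed norms, at the price of a different constant). Note also that the paper does not write out a proof but refers to that of \cite[Thm.~4.1]{MishSch10a}, together with \eqref{3.18} for $k=2$; the mixed-norm issue above is precisely the point you would have to confront to make that argument explicit, so you cannot simply assert the Step 2 inequality and move on.
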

The proof of this result proceeds completely analogous to the 
proof of \cite[Thm. 4.1]{MishSch10a}, using the 
measurability and square integrability \eqref{3.18} 
(with $k=2$) of Theorem~\ref{theo3.2}.


So far, we addressed the MC estimation of the mean field or first
moment. A similar result holds for the MC sample averages of the
$k$-th moment $\cM^k u : = \IE[(u)^{(k)}] \in (L^1(\R^d))^{(k)}$.
\begin{theorem}\label{theo3.4}
  Consider the random degenerate advection diffusion equation
  \eqref{eq:RSCL} with random data $(u_0,f,A) : \Omega \rt E_1$ as in
  \eqref{eq:rv} and satisfying \eqref{eq:assu} and
  $A'(\omega;\cdot)\ge 0$, a.s.  Assume furthermore that for some $k
  \in \IN$ holds $u_0 \in L^{2k} (\Omega; L^1(\R^d))$.  Then, as
  $M\rightarrow \infty$, the MC sample averages
  \begin{equation*}
    E_M[(u(\cdot,t))^{(k)}]
    := 
    \dis\frac{1}{M} \; \dis\sum\limits^M_{i=1} \;(\wh{u}^i(\cdot,t))^{(k)}
  \end{equation*}
  with the $M$ i.i.d.~samples $\wh{u}^i(\cdot,t)$, $i=1,2,...$,
  converge to the $k$-th moment (or spatial $k$-point correlation
  function) $(\cM^k u)(t)$. Moreover, we have the error bound
  \begin{equation*}
    \norm{(\cM^k u)(t) - E_M[(u(\cdot,t;\omega))^{(k)}]}_{L^2(\Omega;L^1(\R^{kd}))}
    \le 
    2 M^{-1/2}
    \norm{ u_0 }^k_{L^{2k}(\Omega;L^1(\R^d))} 
    \;.
  \end{equation*}
\end{theorem}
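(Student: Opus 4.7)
The plan is to reduce the statement to the first–moment case (Theorem~\ref{theo4.1}) by viewing the $k$-th tensor power $u^{(k)}(\omega;\cdot,t) := u(\omega;\cdot,t)\otimes\cdots\otimes u(\omega;\cdot,t) \in L^1(\R^{kd})$ as a single $L^1(\R^{kd})$-valued random variable and then applying the same Hilbert-valued Monte-Carlo estimate. Concretely, for $\IP$-a.e.~$\omega$, set $u^{(k)}(\omega;x_1,\dots,x_k,t) = \prod_{j=1}^k u(\omega;x_j,t)$; by Fubini, pathwise
\begin{equation*}
\|u^{(k)}(\omega;\cdot,t)\|_{L^1(\R^{kd})}
= \|u(\omega;\cdot,t)\|_{L^1(\R^d)}^k.
\end{equation*}

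First, I would check the integrability and measurability of $u^{(k)}$. Combining the above identity with the pathwise $L^1$-bound \eqref{3.19} of Theorem~\ref{theo3.2} yields $\|u^{(k)}(\omega;\cdot,t)\|_{L^1(\R^{kd})} \le \|u_0(\omega;\cdot)\|_{L^1(\R^d)}^k$ $\IP$-a.s., and hence, taking $L^2(\Omega)$-norms,
\begin{equation*}
\|u^{(k)}(\cdot,t)\|_{L^2(\Omega;L^1(\R^{kd}))}
\le \|u_0\|^{k}_{L^{2k}(\Omega;L^1(\R^d))} < \infty,
\end{equation*}
by the standing assumption $u_0\in L^{2k}(\Omega;L^1(\R^d))$. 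Measurability of $\omega\mapsto u^{(k)}(\omega;\cdot,t)$ as an $L^1(\R^{kd})$-valued map follows from Lemma~\ref{lem:continuity} applied to the $k$-fold tensor product $L^1(\R^d)^k \ni (v_1,\dots,v_k)\mapsto v_1\otimes\cdots\otimes v_k \in L^1(\R^{kd})$, which is continuous on bounded sets of $L^1(\R^d)^k$ (multi-linear, with an a priori $L^\infty$-bound $\overline{M}$ from \eqref{eq:utBound} to obtain the required uniform modulus of continuity), together with the already established measurability of $\omega\mapsto u(\omega;\cdot,t)$ in $L^1(\R^d)$.

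Second, with $u^{(k)}(\cdot,t)\in L^2(\Omega;L^1(\R^{kd}))$ in hand and the i.i.d.~samples $\{\widehat{u}^{\,i}(\cdot,t)\}_{i=1}^M$ providing i.i.d.~samples of $u^{(k)}$, I would apply the Monte-Carlo estimate for Banach-valued square-integrable random variables verbatim as in Theorem~\ref{theo4.1}. That is, the i.i.d.~and mean-zero decomposition $E_M[u^{(k)}]-\cM^k u = \frac{1}{M}\sum_{i=1}^M\bigl((\widehat u^{\,i})^{(k)}-\cM^k u\bigr)$, combined with Fubini on $\Omega\times\R^{kd}$ and the standard variance identity for independent, centered summands, leads to
\begin{equation*}
\|\cM^k u(t) - E_M[(u(\cdot,t))^{(k)}]\|_{L^2(\Omega;L^1(\R^{kd}))}
\le 2M^{-1/2}\|u^{(k)}(\cdot,t)\|_{L^2(\Omega;L^1(\R^{kd}))},
\end{equation*}
and inserting the bound from Step~1 yields exactly the asserted $2M^{-1/2}\|u_0\|^{k}_{L^{2k}(\Omega;L^1(\R^d))}$. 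Convergence of $E_M[(u(\cdot,t))^{(k)}] \to \cM^k u(t)$ in $L^2(\Omega;L^1(\R^{kd}))$ is then immediate as $M\to\infty$.

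The main obstacle is the tensor-power step: one must ensure that the product $u\otimes\cdots\otimes u$ is indeed Bochner measurable and square-integrable in $L^1(\R^{kd})$, rather than merely measurable spatially. This is where the uniform $L^\infty$-bound \eqref{eq:utBound} and the $L^1$-contraction \eqref{3.19} from Theorem~\ref{theo3.2} are essential; they transform the $L^{2k}(\Omega;L^1(\R^d))$ hypothesis on $u_0$ into the $L^2(\Omega;L^1(\R^{kd}))$ bound needed to invoke the scalar-type Monte-Carlo estimate. The rest of the argument is a routine copy of the proof of Theorem~\ref{theo4.1}.
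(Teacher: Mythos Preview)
Your approach is correct and coincides with the paper's own treatment, which simply defers to the proof of Theorem~4.2 in \cite{MishSch10a}: one views the $k$-fold tensor power $u^{(k)}(\omega;\cdot,t)\in L^1(\R^{kd})$ as a square-integrable Banach-valued random variable via the identity $\|u^{(k)}\|_{L^1(\R^{kd})}=\|u\|_{L^1(\R^d)}^k$ combined with the $L^1$-contraction \eqref{3.19}, and then applies the first-moment MC bound of Theorem~\ref{theo4.1} verbatim. One minor simplification: for the measurability of $\omega\mapsto u^{(k)}(\omega;\cdot,t)$ you do not need the uniform $L^\infty$-bound \eqref{eq:utBound}, since the map $L^1(\R^d)\ni v\mapsto v^{\otimes k}\in L^1(\R^{kd})$ is continuous (bounded multilinear, hence Borel) and composition with the already measurable $\omega\mapsto u(\omega;\cdot,t)$ suffices.
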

The proof of this theorem is \cs{omitted since it is} 
identical to the proof of Theorem~4.2. in \cite{MishSch10a}.

\subsection{Finite Difference Methods for degenerate convection
  diffusion equations} 
So far, we considered the MCM under the assumption that the entropy
solutions $\wh{u}^i(x,t; \omega) = S(t)
\,(\wh{u}_0^i,\wh{f}^i,\wh{A}^i)(\omega)$ for the Cauchy problem
\eqref{eq:main1} with the data samples
$(\hat{u}_0^i,\wh{f}^i,\wh{A}^i)$ are available exactly.  In practice,
however, we must use numerical approximations of
$S(t)(\hat{u}_0^i,\wh{f}^i,\wh{A}^i)$.

The presentation will, from now on,
be restricted to the one-dimensional case, 
i.e., we consider
\begin{equation}
  \label{eq:1dmain}
  \begin{cases}
    u_t + f(u)_x = A(u)_{xx}, \quad t>0, \ x\in \R,
    \\
    u(x,0)=u_0(x).
  \end{cases}
\end{equation}
We shall examine the class of fully discrete monotone difference
schemes for which Karlsen, Risebro and Storr\o{}sten obtained a
convergence in $L^1$ rate of $\Dx^{1/3}$, where $\Dx$ is the
discretization parameter, in \cite{KarlsenRisebroStorrosten1}. These
schemes are easily generalized to several space dimensions, but
rigorous results regarding convergence rates are much worse. To date,
the best convergence rate in $L^1(\R^d)$ for a fully discrete, implicit
in time scheme is $\Dx^{1/(11+d)}$, see
\cite{KarlsenRisebroStorrosten2}.

For $\Delta x,\Dt>0$, we discretize the physical domain $\Pi_T$ by a
grid with grid cells
\begin{equation*}
  I^n_j=[x_{j-1/2},x_{j+1/2})\times (t_{n-1},t_n], \quad n\ge  0,\, j\in\mathbb{Z},
\end{equation*}
where $x_{j\pm 1/2}=(j\pm 1/2)\Dx$, $j\in\mathbb{Z}$, and $t_n=n\Dt$, $n\in
\mathbb{N}$. We define cell averages of the initial data via
\begin{equation}\label{eq:ini}
  u_j^0= \frac{1}{\Dx}\int_{I_j^0}u_0(x)\,dx,\quad j\in\mathbb{Z}.
\end{equation}
Then we consider the following implicit scheme
\begin{equation}\label{eq:IMP}
  \Dm^t u^n_j+ \Dm F\left(u_j^{n},u_{j+1}^n\right)=\Dm\Dp
  A(u_j^n),\quad  n\ge  1,\, j\in\mathbb{Z}, 
\end{equation}
and the explicit scheme,
\begin{equation}\label{eq:EXP}
  \Dp^t u^n_j+ \Dm F\left(u_j^{n},u_{j+1}^n\right)=\Dm\Dp
  A(u_j^n),\quad n\ge  0,\, j\in\mathbb{Z}, 
\end{equation}
where we have denoted for a quantity
$\{\sigma_j^n\}_{j\in\mathbb{Z},n\in\mathbb{N}}$,
\begin{equation*}
  D^t_\pm \sigma^n_j=\pm\frac{1}{\Dt} (\sigma^{n\pm
    1}_j-\sigma_j^n),\quad D_{\pm}
  \sigma_j^n=\pm\frac{1}{\Dx}(\sigma_{j\pm 1}^n-\sigma_j^n). 
\end{equation*}
We then define the piecewise constant approximation to
\eqref{eq:1dmain} by
\begin{equation}\label{eq:pwc}
  u_\Delta(x,t)=u^n_j,\quad (x,t)\in I^n_j,
\end{equation}
where $u^n_j$ is defined by either \eqref{eq:IMP} or
\eqref{eq:EXP}. The numerical flux $F\in C^1(\R^2)$ is chosen such
that it is consistent with $f$, that is, $F(u,u)=f(u)$ for all
$u\in\R$, and monotone, i.e.
\begin{equation*}
  \frac{\partial}{\partial u} F(u,v)\ge  0\quad \text{and}\quad
  \frac{\partial}{\partial v} F(u,v)\le 0. 
\end{equation*}
In order to obtain convergence rates, it is furthermore necessary to
choose $F$ Lipschitz continuous and such that it can be written
\begin{equation}\label{eq:zerlegung}
  F(u,v)=F_1(u)+F_2(v), \quad F'_1(u)+F'_2(u)=f'(u),
\end{equation}
see \cite{KarlsenRisebroStorrosten1}. 
Examples of monotone numerical fluxes satisfying \eqref{eq:zerlegung}
are the Engquist-Osher flux as well as the Lax-Friedrichs and the
upwind flux. In order to show convergence of the explicit scheme, the
following CFL-condition is needed,
\begin{equation}\label{eq:cflexp2}
  \Dt\le C \Dx^2,
\end{equation}
\cite{ek2000} and in order to show a convergence rate, one even needs
%
\begin{equation}\label{eq:cflexp}
  \Dt\le C \Dx^{8/3},
\end{equation}
see \cite{KarlsenRisebroStorrosten1}.
Whether this restrictive CFL-condition \cs{is sharp in order} 
to prove a convergence rate is not known. 
\cs{Naturally,} no CFL-condition is needed \cs{to ensure stability of} 
the implicit scheme, \cite{ekr2000}. 
In order to obtain \`a priori estimates for the
explicit scheme, the numerical flux function $F$ and the diffusion
operator $A$ have to satisfy the following condition
\begin{equation}\label{eq:condexp}
  \frac{\Dt}{\Dx} (F'_1(z)-F'_2(z))+2\frac{\Dt}{\Dx^2} A'(w)\le 1,\quad
  \text{for all $z$ and $w$,} 
\end{equation}
see \cite{ek2000}.
Then we have the following stability and convergence results for the
schemes \eqref{eq:IMP} and \eqref{eq:EXP}, 
\cite{ek2000,ek1998,KarlsenRisebroStorrosten1}
%
%
\begin{theorem}\label{thm:diffschemes}
  Let $u_0\in BV(\R)\cap L^{1}(\R)$, $f$, $A$ locally $C^1$, $A'\ge 0$
  and $u_0\in \mathcal{B}(f,A)$, where $\mathcal{B}(f,A)$ is defined
  in \eqref{eq:initialset}. Let $F$ be a monotone numerical flux
  function consistent with $f$, satisfying
  \eqref{eq:zerlegung}. Denote by $u_\Delta(x,t)$ the piecewise
  constant function defined in \eqref{eq:pwc}, where $u_j^n$ are
  computed by either the explicit scheme \eqref{eq:EXP} or the
  implicit scheme \eqref{eq:IMP}. Assume for the explicit scheme in
  addition that $\Dt$ satisfies \eqref{eq:cflexp2} and that
  \eqref{eq:condexp} holds. Then we have
  \begin{itemize}
  \item[{\rm i)}] The approximations $u_\Delta$ converge, as the
    discretization parameters $(\Delta x,\Dt)\rightarrow 0$ to the
    unique entropy solution of \eqref{eq:1dmain}. Moreover they
    satisfy
    \begin{align*}
      \norm{u_\Delta(\cdot,t)}_{L^1(\R)}&\le \norm{u_0}_{L^1(\R)},\\
      \norm{u_\Delta(\cdot,t)}_{L^\infty(\R)}&\le \norm{u_0}_{L^\infty(\R)},\\
      \abs{u_\Delta(\cdot,t)}_{BV(\R)}&\le \abs{u_0}_{BV(\R)},\\
      \sup_j \abs{F(u^n_j,u^n_{j+1})-\Dp A(u^{n}_j)}&\le
      \sup_j \abs{F(u^0_j,u^0_{j+1})-\Dp A(u^{0}_j)},\\
      \sum_j \abs{\Dm F(u^n_j,u^n_{j+1})-\Dm\Dp A(u^n_j)}&\le \sum_j
      \abs{\Dm F(u^0_j,u^0_{j+1})-\Dm\Dp A(u^0_j)}.
    \end{align*}
    Furthermore, $u_\Delta$ is $L^1(\R)$-Lipschitz continuous in time,
    viz., for any $t_n$, $t_m>0$,
    \begin{equation*}
      \norm{u_\Delta(\cdot,t_n)-u_\Delta(\cdot,t_m)}_{L^1(\R)}\le
      \abs{f(u_0)-A(u_0)}_{BV(\R)} \abs{t_n-t_m}.
    \end{equation*}
  \item[{\rm ii)}] If for the explicit scheme in addition
    \eqref{eq:cflexp} holds, the approximations $u_\Delta$ converge at
    the rate $1/3$ to the entropy solution $u$ of \eqref{eq:1dmain}:
    \begin{equation*}
      \norm{u_\Delta(\cdot,t_n)-u(\cdot,t_n)}_{L^1(\R)}\le 
      \norm{u_\Delta(\cdot,0)-u_0}_{L^1(\R)}+C_T\Dx^{1/3},
    \end{equation*}
    where the constant $C_T$ takes the form
    \begin{equation*}
      C(1+T)\left((1+\norm{f}_{\mathrm{Lip}})\abs{u_0}_{BV(\R)}
        +\norm{A(u_0)_x}_{L^1(\R)}+\abs{f(u_0)-A(u_0)_x}_{BV(\R)}\right), 
    \end{equation*}
    with $C$ independent of $u_0$, $f$ and $A$.
  \end{itemize}
\end{theorem}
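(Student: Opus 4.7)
The plan is to assemble the statement from the three cited sources \cite{ek2000,ek1998,KarlsenRisebroStorrosten1}, since each individual estimate has already been established there; the task is to verify that their hypotheses are met in the present setting and combine them into a single statement. First I would confirm that both schemes \eqref{eq:IMP} and \eqref{eq:EXP} are monotone under the stated hypotheses: for the implicit scheme this is unconditional because $F$ is a monotone numerical flux and $A'\ge 0$, while for the explicit scheme the condition \eqref{eq:condexp} is precisely what is needed to guarantee monotonicity of the one-step update map. From monotonicity and the conservation form, a Crandall--Tartar type argument yields the $L^1$-contraction property of the discrete solution operator on shifted data, from which the $L^1$, $L^\infty$, and $BV$ bounds of part (i) follow by the usual comparisons with constants and with spatial translates of $u_0$, exactly as in \cite{ek2000,ek1998}.

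Next I would derive the two discrete flux-type estimates. Applying the scheme to the temporal difference $w^n_j := D^t_\pm u^n_j$ and using monotonicity produces both $\ell^\infty$- and $\ell^1$-contraction for $w^n_j$ in $n$, which is exactly the stated claim once one rewrites $w^n_j = -D_-\bigl(F(u^n_j,u^n_{j+1}) - \Dp A(u^n_j)\bigr)$ from the scheme. The $L^1$-Lipschitz-in-time estimate then follows immediately from the scheme itself, summing $|u^{n+1}_j - u^n_j|$ in $j$ and using the $\ell^1$ flux bound just obtained; telescoping between $t_n$ and $t_m$ produces the constant $|f(u_0)-A(u_0)_x|_{BV(\R)}$ on the right. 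Convergence to the unique entropy solution of \eqref{eq:1dmain} as $(\Dx,\Dt)\to 0$ is then obtained by a standard Lax--Wendroff-type argument: the BV and time-Lipschitz bounds furnish strong $L^1_{\mathrm{loc}}$ precompactness via Kolmogorov's theorem, while a discrete Kru\v{z}kov entropy inequality, valid for any consistent monotone scheme, passes to the limit and identifies the limit with the unique entropy solution guaranteed by Theorem~\ref{theo3.1}.

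For part (ii), the $\Dx^{1/3}$ rate is the main theorem of \cite{KarlsenRisebroStorrosten1}, which I would invoke directly. The argument there is a parabolic doubling-of-variables analysis adapted to degenerate convection--diffusion problems: the decomposition \eqref{eq:zerlegung} of $F$ into monotone increasing and decreasing components is used to handle the cross terms generated by the nonlinear diffusion $A$, and the stringent CFL condition \eqref{eq:cflexp} is required so that the explicit time-stepping truncation error does not degrade the spatial rate $\Dx^{1/3}$. The main obstacle in this part is precisely the rate analysis itself, where the hyperbolic error contribution (of order $\Dx^{1/2}$ for smooth problems) and the parabolic error contribution must be balanced simultaneously against the doubling parameter; I would not reproduce the argument but cite \cite[Thm.~4.1]{KarlsenRisebroStorrosten1} for the explicit scheme and its implicit counterpart in the same paper. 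The explicit form of the constant $C_T$ comes out of the dependence of the doubling estimate on $|u_0|_{BV}$, $\|A(u_0)_x\|_{L^1}$, and the flux-BV quantity controlled in part (i).
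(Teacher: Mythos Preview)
Your proposal is correct and matches the paper's own treatment: the paper does not give an independent proof but simply cites \cite[Thm.~3.9, Cor.~3.10]{ek2000} and \cite[Thm.~3.9, Lem.~3.3--3.5]{ek1998} for part (i) and \cite{KarlsenRisebroStorrosten1} for part (ii). Your sketch of the underlying arguments (monotonicity, Crandall--Tartar, discrete flux bounds, Kolmogorov compactness, and the doubling-of-variables rate analysis) is accurate and in fact more detailed than what the paper provides.
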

Point i) was proved in \cite[Thm. 3.9, Cor. 3.10]{ek2000} for the
explicit scheme and \cite[Thm. 3.9, Lem. 3.3, 3.4, 3.5]{ek1998} for
the implicit scheme , ii) in \cite{KarlsenRisebroStorrosten1}.

For the purpose of analyzing the efficiency of the MC- and
MLMC-method, it is important to have an estimate on the computational
work used to compute one approximation of the solution by the
deterministic FD-schemes and how it increases with respect to mesh
refinement. By \emph{(computational) work} or \emph{cost} of an
algorithm, we mean the number of floating point operations performed
during the execution of the algorithm. We assume that this is
proportional to the run time of the algorithm. In the actual
computations we deal with bounded domains, so that the number of grid
cells in one dimension scales as $1/\Dx$.
\subsubsection{Work estimate explicit scheme
  \eqref{eq:EXP}}\label{ssec:workexp}
In case of the explicit scheme, the number of operations in one time
step scales linearly with the number of grid cells which in turn
scales as $\Dx^{-1}$ (we assume the computational domain is
bounded). 
Hence the work \cs{can be bounded as}
$W_\Delta^{\mathrm{exp}} \cs{\leq} C \Dt^{-1}\Dx^{-1}$. 
Taking the CFL-condition \eqref{eq:cflexp} into account, 
we obtain \cs{the (likely pessismistic) work bound}
\begin{equation*}
  W_{\Delta}^{\mathrm{ex}}= {\mathcal{O}(\Dx^{-11/3})}
\;.
\end{equation*}
\subsubsection{Work estimate implicit scheme \eqref{eq:IMP}}
\label{ssec:workimp}
In the implicit scheme we have to solve the nonlinear equation
\eqref{eq:IMP} for
$\underline{u}^{n+1}:=(\dots,u^{n+1}_{j-1},u^{n+1}_j,u^{n+1}_{j+1},\dots)$
in each timestep. Since solving this equation exactly is either
impossible or computationally very expensive, we prefer to solve it
only approximately by an iterative method. We consider here the case
that this method is the Newton iteration, which we iterate until the
residual is of order $\Dx \Dt$ (this is possible since the mapping
$\underline{u}^n\rightarrow
\underline{u}^{n+1}=:\Psi(\underline{u}^n)$ defined by \eqref{eq:IMP}
is a contraction \cs{for sufficiently small $\Dt$ and CFL constant.}
\cs{
In general the Lipschitz constant should scale as $1/\Dx$, 
so a small value of $\Dt$ alone is not sufficient for the
contraction property to hold.
For details, we refer to \cite{ek1998}}. 
\cs{The additional error introduced by finite termination of
the iterative nonlinear system solver
will not increase the overall error:}
denoting by $\underline{u}^{n,(0)}$
the approximation at time $t=t_n$ obtained by solving \eqref{eq:IMP}
exactly in each time step, $\underline{u}^{n,(j)}$ the approximation
obtained by solving \eqref{eq:IMP} approximately via Newton iteration
in the first $j$ timesteps and afterwards exactly (so that
$\underline{u}^{n,(n)}:=u_\Delta(\cdot,t_n)$ is the approximation
obtained by using Newton's method in each timestep), we have
\begin{align*}
  \norm{u_\Delta(\cdot,t_n)-u(\cdot,t_n)}_{L^1(\R)}
  &=\norm{\underline{u}^{n,(n)}-u(\cdot,t_n)}_{L^1(\R)}
  \\
  &=\biggl\|\sum_{m=0}^{n-1}
  (\underline{u}^{n,(m+1)}-\underline{u}^{n,(m)})
  +\underline{u}^{n,(0)}-u(\cdot,t_n)\biggr\|_{L^1(\R)}
  \\
  &\le \sum_{m=0}^{n-1}
  \norm{\underline{u}^{n,(m+1)}-\underline{u}^{n,(m)}}_{L^{1}(\R)}
  +\norm{\underline{u}^{n,(0)}-u(\cdot,t_n)}_{L^1(\R)}
  \\
  &\le \sum_{m=0}^{n-1}
  \norm{\underline{u}^{m+1,(m+1)}-\underline{u}^{m+1,(m)}}_{L^{1}(\R)}
  +C_T \Dx^{1/3}\\
  &\le n\Dx\Dt+ C_T \Dx^{1/3}\\
  &=t_n \Dx+ C_T \Dx^{1/3}\le \widetilde{C}_T \Dx^{1/3},
\end{align*}
where we have used the $L^1$-contraction property of the scheme for
the third last inequality. If the starting value for the Newton
iteration is chosen such that it is in a sufficiently small neighborhood of
the fixpoint,  the convergence order
of the Newton method is \cs{locally} quadratic. 
\cs{In} order to achieve an
error of less than $C\Dx\Dt$ in one timestep by solving the nonlinear
system only approximately, \cs{it suffices} to perform
$\mathcal{O}(\log(\Dx^{-1}\Dt^{-1}))$ \cs{many Newton} iterations. 
If we take
$\Dt=\theta \Dx$ for some constant $\theta>0$, these are
\cs{altogether} 
$\mathcal{O}(\log(\Dx^{-2}))=\mathcal{O}(\log(\Dx^{-1}))$ 
\cs{Newton} steps. 
In each step of the Newton iteration, we invert and multiply a
tridiagonal matrix of size $\mathcal{O}(\Dx^{-2})$ with a vector of
length $\mathcal{O}(\Dx^{-1})$ and subtract it from another vector of
length $\mathcal{O}(\Dx^{-1})$. The tridiagonal matrix can be inverted
in $\mathcal{O}(\Dx^{-1})$ operations using the Thomas algorithm (in
case of periodic boundary conditions we use the Sherman-Morrison
formula). 
Hence the total number of floating point operations 
which are  necessary for one Newton step is
$\mathcal{O}(\Dx^{-1})$. It follows that the work done in one
timestep is of order $\mathcal{O}(\log(\Dx^{-1})\Dx^{-1})$. 
As there are altogether $n=T/\Dt$ timesteps, and 
since we can choose the timestep of order
$\Dt=\theta\Dx$, we obtain the following bound on the total work
for one execution of the implicit scheme,
\begin{equation*}
  W_{\Delta}^{\mathrm{im}} = \mathcal{O}(\Dx^{-2}\log(\Dx^{-1})
\;.
\end{equation*}
In the Monte Carlo Finite Difference Methods (MC-FDMs), we combine MC
sampling of the random initial data with the FDMs \eqref{eq:IMP} and
\eqref{eq:EXP}. In the convergence analysis of these schemes, we shall
require the application of the FDMs \eqref{eq:IMP} and \eqref{eq:EXP}
to random initial data, flux function and diffusion operator
$(u_0,f,A) \in L^p(\Omega; E_1)$ for some $1\le p\le\infty$.  Given a
draw $(u_0(\omega;\cdot),f(\omega;\cdot),A(\omega;\cdot))$ of
$(u_0,f,A)$, the FDMs \eqref{eq:ini} with \eqref{eq:EXP} or
\eqref{eq:IMP} define families $u_\Delta(\omega;x,t)$ of grid
functions. We have the following
\begin{proposition}\label{prop4.3}
  Consider the FDMs \eqref{eq:ini}--\eqref{eq:EXP}, \eqref{eq:IMP} for
  the approximation of the entropy solution corresponding to the draw
  $(u_0,f,A)(\omega)$ of the random data.

  Then, the random grid functions $\Omega \ni \omega \longmapsto
  u_\Delta (\omega;x,t)$ defined by \eqref{eq:pwc} satisfy, for every
  $0 < \ov{t} < \infty$, $0 < \Delta x < 1$, and every $k \in \IN \cup
  \{\infty\}$ the stability bounds:
  \begin{equation*}
    \norm{u_\Delta(\cdot;\cdot, \ov{t})}_{L^k(\Omega; L^\infty(\R))} 
    \le 
    \norm{u_0}_{L^k(\Omega; L^\infty(\R))}, 
  \end{equation*}
  \begin{equation*}
    \norm{u_\Delta(\cdot;\cdot, \ov{t})}_{L^k(\Omega; L^1(\R))} 
    \le 
    \norm{u_0}_{L^k(\Omega; L^1(\R))}.
  \end{equation*}
  We also have the bound
  \begin{multline}\label{4.22d}
    \norm{u(\cdot;\cdot, \ov{t}) - u_\Delta(\cdot;\cdot,
      \ov{t})}_{L^k(\Omega; L^1(\R))} \le
    \norm{u_0 - u_\Delta(\cdot;\cdot,0)}_{L^k(\Omega; L^1(\R))}\\
    + C(1+\ov{t}) \Delta x^{1/3}
    \Bigl\{\norm{\left(1+\norm{f(\omega;\cdot)}_{\mathrm{Lip}}\right)
    \abs{(u_0)(\omega)}_{BV(\mathbb{R})}}_{L^k(\Omega)}\\
    +\norm{A(u_0)_x}_{L^k(\Omega;L^1(\R))}
    +\norm{\abs{(f(u_0)-A(u_0)_x)(\omega)}_{BV(\R)}}_{L^k(\Omega)}\Bigr\}.
  \end{multline}
\end{proposition}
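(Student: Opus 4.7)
The plan is a pathwise reduction to Theorem~\ref{thm:diffschemes}: for $\IP$-a.e.\ $\omega$ I apply the deterministic stability and convergence bounds to the realization $(u_0(\omega;\cdot),f(\omega;\cdot),A(\omega;\cdot))$, and then integrate out in $\omega$, using measurability of the random grid function $\omega\mapsto u_\Delta(\omega;\cdot,\ov{t})$ together with Minkowski's inequality.

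First I would establish that, for fixed discretization parameters $(\Dx,\Dt)$, the map $\Phi_{\Delta,\ov{t}}:E_1\ni(u_0,f,A)\mapsto u_\Delta(\cdot,\ov{t})\in L^1(\R)\cap L^\infty(\R)$ defined by \eqref{eq:ini}, \eqref{eq:EXP} (or \eqref{eq:IMP}) together with \eqref{eq:pwc} is continuous. For the explicit scheme this is immediate, since each $u^n_j$ is a composition of continuous functions of finitely many values of $f'$, $A'$ and of $u_0$ (the latter entering through the cell averages, which depend $L^1$-continuously on $u_0$). For the implicit scheme each time step is a Banach contraction (for sufficiently small $\Dt$ relative to $\Dx$) whose fixed point depends continuously on the data, so $\Phi_{\Delta,\ov{t}}$ is again continuous between the relevant Banach spaces. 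Invoking Lemma~\ref{lem:continuity} with $\Phi_{\Delta,\ov{t}}$ playing the role of $\Psi$ then shows that $\omega\mapsto u_\Delta(\omega;\cdot,\ov{t})$ is an $L^1(\R)$-valued (and, analogously, $L^\infty(\R)$-valued) random variable.

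With measurability in hand, the two stability bounds follow by raising the deterministic estimates $\norm{u_\Delta(\omega;\cdot,\ov{t})}_{L^\infty(\R)}\le\norm{u_0(\omega;\cdot)}_{L^\infty(\R)}$ and $\norm{u_\Delta(\omega;\cdot,\ov{t})}_{L^1(\R)}\le\norm{u_0(\omega;\cdot)}_{L^1(\R)}$ from Theorem~\ref{thm:diffschemes}~i) to the $k$-th power and integrating over $\Omega$ (or taking essential supremum when $k=\infty$). For the convergence estimate \eqref{4.22d}, I would apply Theorem~\ref{thm:diffschemes}~ii) pathwise to get, $\IP$-a.s.,
\begin{equation*}
\norm{u(\omega;\cdot,\ov{t})-u_\Delta(\omega;\cdot,\ov{t})}_{L^1(\R)}\le\norm{u_0(\omega;\cdot)-u_\Delta(\omega;\cdot,0)}_{L^1(\R)}+\Dx^{1/3}\,C_T(\omega),
\end{equation*}
where $C_T(\omega)$ is the random version of the constant in Theorem~\ref{thm:diffschemes}~ii). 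Taking $L^k(\Omega)$-norms on both sides and applying Minkowski's inequality produces \eqref{4.22d}; finiteness of $\norm{C_T}_{L^k(\Omega)}$ is automatic from the essentially uniform bounds \eqref{eq:assu2}--\eqref{eq:assAf}, combined with $\norm{A(u_0)_x(\omega;\cdot)}_{L^1(\R)}\le\norm{A'(\omega;\cdot)}_{C^0(I)}\abs{u_0(\omega;\cdot)}_{BV(\R)}$.

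The main obstacle is the continuity of the implicit-scheme data-to-solution map $\Phi_{\Delta,\ov{t}}$: one has to verify that solving the nonlinear system \eqref{eq:IMP} for $\underline u^{n+1}$ as a function of $(\underline u^n,f,A)$ yields a continuous map between the relevant Banach spaces, for instance via a Banach fixed-point argument combined with parameter-continuous dependence of the contraction. The explicit-scheme case and the translation of the pathwise estimates into their $L^k(\Omega)$ analogues are, by comparison, routine bookkeeping once continuity (and hence measurability via Lemma~\ref{lem:continuity}) is established.
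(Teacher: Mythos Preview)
The paper does not actually give a proof of Proposition~\ref{prop4.3}; it states the proposition and immediately passes to Remarks~\ref{rem:conseq} and~\ref{rem:ass}, treating the result as an immediate pathwise consequence of the deterministic Theorem~\ref{thm:diffschemes}. Your proposal correctly supplies the missing details---pathwise application of the deterministic stability and convergence estimates, measurability via continuity of the scheme map and Lemma~\ref{lem:continuity}, and passage to $L^k(\Omega)$ by Minkowski's inequality---and this is precisely the argument the paper leaves implicit.
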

\begin{remark}\label{rem:conseq}
  Under the assumptions \eqref{eq:assu} -- \eqref{eq:assAf},
  \eqref{4.22d} becomes
  \begin{multline*}
    \norm{u(\cdot;\cdot, \ov{t}) - u_\Delta(\cdot;\cdot,
      \ov{t})}_{L^k(\Omega; L^1(\R))} \le \norm{u_0 -
      u_\Delta(\cdot;\cdot,0)}_{L^k(\Omega; L^1(\R))}
    \\
    + C(1+\ov{t}) \Delta x^{1/3} \seq{\left(1+C_f+C_A\right)
      C_{\mathrm{TV}} +C_{A,f}}.
  \end{multline*}
\end{remark}
\begin{remark}\label{rem:ass}
  We see from \eqref{4.22d} that in order to obtain the convergence
  rate of $1/3$ in $L^k(\Omega)$ it would \cs{suffice} to assume
  \eqref{3.16}, \eqref{eq:assu}, \eqref{eq:assA},
  $\abs{f(u_0)-A(u_0)_x}_{BV(\R)}(\omega)\in L^k(\Omega)$, $A', f'\in
  L^{pk}(\Omega)$, $\abs{u_0}_{BV(\R)}\in L^{qk}(\Omega)$ for some
  $p,q\ge 1$ satisfying $1/p+1/q=1$. However, in order to obtain a
  uniform CFL-condition for the explicit scheme (which gives us the
  same asymptotic work estimate for each simulation with the explicit
  scheme), we need \eqref{eq:assf} and \eqref{eq:assA2} to hold as
  well.
\end{remark}

\subsection{MC-FDM Scheme}
We next define and analyze the MC-FDM scheme.  It is based on the
straightforward idea of generating, possibly in parallel, independent
samples of the random initial data and then, for each sample of the
random initial data, flux function and diffusion operator, to perform
one FD simulation.  The error of this procedure is bound by two
contributions: a (statistical) sampling error and a (deterministic)
discretization error. We express the asymptotic efficiency of this
approach (in terms of overall error versus work). It will be seen that
the efficiency of the MC-FDM is, in general, inferior to that of the
deterministic schemes \eqref{eq:EXP} and \eqref{eq:IMP}.  The present
analysis will constitute a key technical tool in our subsequent
development and analysis of the multilevel MC-FDM (``MLMC-FDM'' for
short) which does not suffer from this drawback.

\subsubsection{Definition of the MC-FDM Scheme}
We consider once more the initial value problem \eqref{eq:RSCL} with
random data $(u_0,f,A)$ satisfying \eqref{eq:assu} -- \eqref{eq:assAf}
and \eqref{3.16} for sufficiently large $k \in \IN$ (to be specified
in the convergence analysis).  The MC-FDM scheme for the MC estimation
of the mean of the random entropy solutions then consists in the
following:

\begin{definition}\label{def4.4} (MC-FDM Scheme)
  Given $M \in \IN$, generate $M$ i.i.d.~samples
  $\{(\wh{u}_0^i,\wh{f}^i,\wh{A}^i)\}^M_{i=1}$.  Let
  $\{\wh{u}^i(\cdot,t)\}_{i=1}^M$ denote the unique entropy solutions
  of the degenerate convection diffusion equations \eqref{eq:main1} for
  these data samples, i.e.
  \begin{equation*}
    \wh{u}^i(\cdot,t) = S(t) \left(\wh{u}_0^i,\wh{f}^i,\wh{A}^i\right), \quad i
    = 1,\ldots, M. 
  \end{equation*}
  Then the MC-FDM approximations of $\cM^k(u(\cdot,t))$ are defined as
  statistical estimates from the ensemble
  \begin{equation*}
    \{\wh{u}_\Delta^i (\cdot,t)\}^M_{i=1}
  \end{equation*}
  obtained from the FD approximations by \eqref{eq:EXP} or
  \eqref{eq:IMP} of \eqref{eq:main1} with data samples
  $\{(\wh{u}_0^i,\wh{f}^i,\wh{A}^i)\}_{i=1}^M$: Specifically, the
  first moment of the random solution $u(\omega;\cdot, t)$ at time $t
  > 0$, is estimated as
  \begin{equation}\label{4.25}
    \cM^1(u(\cdot, t))  \approx E_M[u_\Delta (\cdot, t)] 
    := 
    \dis\frac{1}{M} \;\dis\sum\limits^M_{i=1} \,\wh{u}_\Delta^i (\cdot,t)\,,
  \end{equation}
  and, for $k > 1$, the $k$th moment (or $k$-point correlation
  function) $\cM^k(u(\cdot,t))=\IE[(u(\cdot,t))^{(k)}]$ is estimated
  by
  \begin{equation}\label{4.26}
    E_M^{(k)} [u_\Delta (\cdot, t)] 
    := 
    \frac{1}{M} \;\dis\sum\limits^M_{i=1} \,
    \underbrace{(\wh{u}_\Delta^i \otimes \dots \otimes
      \wh{u}_\Delta^i)}_{k\text{-times}}  
    \,(\cdot,t)\,.
  \end{equation}
  More generally, for $k > 1$, we consider time instances
  $t_1,\dots,t_k \in (0,T]$, $T < \infty$, and define the statistical
  FDM estimate of $\cM^k(u)(t_1,...,t_k)$ by
  \begin{equation}\label{4.27}
    E^{(k)}_M \,[u_\Delta] \,(t_1,\dots,t_k)
    : = 
    \dis\frac{1}{M} \sum\limits^M_{i=1} 
    \underbrace{
      (\wh{u}_\Delta^i (\cdot,t_1) \otimes \dots \otimes
      \wh{u}_\Delta^i (\cdot,t_k))}_{k\text{-times}}. 
  \end{equation}
\end{definition}

\subsubsection{Convergence Analysis of MC-FDM}
We next address the convergence of $E_M [u_\Delta]$ to the mean
$\IE(u)$. Arguing as in \cite{Mishr478,MishSch10a}, 
and using the error bounds in Proposition \ref{prop4.3},
we obtain the following result.
\begin{theorem}\label{theo4.5}
  Assume that
  \begin{equation*}
    u_0 \in L^2(\Omega, L^1(\R))
  \end{equation*}
  and that \eqref{eq:assu} -- \eqref{eq:assAf}
  hold. 
  Then the MC estimate $E_M[u_\Delta (\cdot,t)]$ defined in 
    \eqref{4.25}) as in Definition~\ref{def4.4} satisfies, for every
  $M$, the error bound
  \begin{multline}\label{4.29}
    \norm{\IE[u(\cdot,t)] -
    E_M[u_\Delta(\cdot,t;\omega)]}_{L^2(\Omega;L^1(\R))} \le
    C\biggl\{
    M^{-1/2} \norm{u_0}_{L^2(\Omega;L^1(\R))}\\
    + \norm{u_0 - u_\Delta(\cdot;\cdot,0)}_{L^2(\Omega;L^1(\R))} +
    \Delta x^{1/3} (1+\ov{t})
    \Bigl\{\norm{A(u_0)_x}_{L^2(\Omega;L^1(\R))}\\
    +\norm{\left(1+\norm{f(\omega;\cdot)}_{\mathrm{Lip}}\right)
      \abs{u_0}_{BV(\R)}(\omega)}_{L^2(\Omega)} 
    +\norm{\abs{f(u_0)-A(u_0)_x}_{BV(\R)}(\omega)}_{L^2(\Omega)}\Bigr\}\biggr\}.
  \end{multline}
  where $C > 0$ is independent of $M$ and of $\Delta x$ as $M \rt
  \infty$ and as $\Delta x,\Delta t \downarrow 0$.
\end{theorem}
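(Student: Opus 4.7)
The plan is to combine the pure Monte Carlo bound of Theorem \ref{theo4.1} with the pathwise finite difference error estimate of Proposition \ref{prop4.3}, via the standard splitting
\begin{equation*}
\IE[u(\cdot,t)] - E_M[u_\Delta(\cdot,t)]
= \bigl(\IE[u(\cdot,t)] - E_M[u(\cdot,t)]\bigr) + E_M[u(\cdot,t)-u_\Delta(\cdot,t)].
\end{equation*}
Applying the triangle inequality in $L^2(\Omega;L^1(\R))$, the first summand is handled directly by Theorem \ref{theo4.1} under the assumption $u_0\in L^2(\Omega;L^1(\R))$, which produces the $2M^{-1/2}\|u_0\|_{L^2(\Omega;L^1(\R))}$ contribution in \eqref{4.29}.

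For the second summand, I would write $E_M[u-u_\Delta]=\frac{1}{M}\sum_{i=1}^M (\wh u^i - \wh u_\Delta^i)$, take the $L^1(\R)$-norm inside the sum, and then use the fact that the samples $(\wh u_0^i,\wh f^i,\wh A^i)$ are i.i.d.\ with the same law as $(u_0,f,A)$, so that $\|\wh u^i-\wh u_\Delta^i\|_{L^1(\R)}$ are i.i.d.\ nonnegative random variables distributed as $\|u-u_\Delta\|_{L^1(\R)}$. Expanding the square and using Jensen, $\IE[X_iX_j]\le \IE[X^2]$ for all $i,j$, gives
\begin{equation*}
\bigl\|E_M[u-u_\Delta]\bigr\|_{L^2(\Omega;L^1(\R))}^2
\le \IE\Bigl[\bigl(\tfrac{1}{M}\textstyle\sum_i\|\wh u^i-\wh u_\Delta^i\|_{L^1(\R)}\bigr)^2\Bigr]
\le \|u-u_\Delta\|_{L^2(\Omega;L^1(\R))}^2,
\end{equation*}
so no extra $M$-dependent factor appears. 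Measurability of all quantities involved (which ensures the expectations make sense) follows from Lemma \ref{lem:continuity} applied to the data-to-solution map $S(t)$ using the continuity estimate \eqref{eq:continentrsol3}, and analogously for the discrete solution map defined by \eqref{eq:IMP} or \eqref{eq:EXP}, which is a continuous function of $(u_0,f,A)$ under the stability conditions.

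Finally, to bound $\|u-u_\Delta\|_{L^2(\Omega;L^1(\R))}$, I would invoke Proposition \ref{prop4.3} with $k=2$: estimate \eqref{4.22d} yields exactly the initial data error $\|u_0 - u_\Delta(\cdot;\cdot,0)\|_{L^2(\Omega;L^1(\R))}$ together with the $\Delta x^{1/3}(1+\bar t)$ times the bracket containing the $L^2(\Omega)$ norms of $(1+\|f(\omega;\cdot)\|_{\mathrm{Lip}})|u_0|_{BV(\R)}$, $\|A(u_0)_x\|_{L^1(\R)}$, and $|f(u_0)-A(u_0)_x|_{BV(\R)}$. Assembling the two contributions completes the proof, with the constant $C$ absorbing the factor $2$ from Theorem \ref{theo4.1} and the constants from Proposition \ref{prop4.3}.

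No serious obstacle is expected; the only technical point is the i.i.d.\ averaging argument above, which is routine but must be stated carefully so as to avoid spurious factors of $M$ and to ensure that the per-sample pathwise estimate \eqref{4.22d}, which is only a moment estimate on the law of $(u_0,f,A)$, suffices to control the statistical error of the empirical mean of the discrete samples.
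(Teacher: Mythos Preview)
Your proposal is correct and follows exactly the approach indicated by the paper, which simply states ``Arguing as in \cite{Mishr478,MishSch10a}, and using the error bounds in Proposition \ref{prop4.3}'': split $\IE[u]-E_M[u_\Delta]$ into a statistical part controlled by Theorem~\ref{theo4.1} and a discretization part controlled by Proposition~\ref{prop4.3} with $k=2$. The i.i.d.\ averaging argument you give for the second term is the standard one and yields the bound \eqref{4.29} with the $L^2(\Omega)$-norms appearing in the discretization contribution.
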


\subsubsection{Work estimates}\label{sssec:workmc}
We have seen in Sections~\ref{ssec:workexp} and \ref{ssec:workimp}
that the computational work to obtain $\{u_\Delta(\cdot,t)\}_{0\le
  t\le T}$, computed by the explicit or implicit scheme respectively,
is asymptotically, as $\Dx,\Dt\rightarrow 0$, of order
\begin{equation*}
  W^{\rm{ex}}_\Delta \leq C \Dx^{-11/3},\quad  W^{\rm{im}}_\Delta\leq C
  \Dx^{-2} \log(\Dx^{-1}),  
\end{equation*}
which implies that the work for the computation of the 
MC estimate $E_M[u_\Delta(\cdot,t)]$ 
is of order
\begin{equation}\label{4.31}
  W^{\rm{ex}}_{\Delta,M} \leq C M\Dx^{-11/3},\quad
  W^{\rm{im}}_{\Delta,M} \leq C M\Dx^{-2} \log(\Dx^{-1}),  
\end{equation}
so that we obtain from (\ref{4.29}) the convergence order in terms of
work: To this end we equilibrate in (\ref{4.29}) the two bounds by
choosing $M^{-1/2} \sim \Delta x^{1/3}$, i.e. $M = C \Delta x^{-2/3}$.
Inserting in \eqref{4.31} yields
\begin{equation*}
  W^{\rm{ex}}_{\Delta,M} \leq C \Dx^{-13/3},\quad
  W^{\rm{im}}_{\Delta,M} \leq C \Dx^{-8/3} \log(\Dx^{-1}), 
\end{equation*}
so that we obtain from \eqref{4.29}
\begin{subequations}\label{4.33}
  \begin{align}\label{seq1:explw}
    \norm{\IE[u(\cdot,t)] - E_M [u_\Delta(\cdot,t)]
    }_{L^2(\Omega;L^1(\R))} &\le \CutAf \Delta x^{1/3} \le
     \CutAf\,(W^{\rm{ex}}_{\Delta,M})^{-1/13},\\
    \norm{\IE[u(\cdot,t)] - E_M [u_\Delta(\cdot,t)]
    }_{L^2(\Omega;L^1(\R))} &\le
     \CutAf \,(W^{\rm{im}}_{\Delta,M}(\log(W^{\rm{im}}_{\Delta,M}))^{-1})^{-1/8},
    \label{seq2:implw}
  \end{align}
\end{subequations}
where $\CutAf$ is given by
\begin{multline}\label{eq:const}
   \CutAf = C(1+t)\Bigl\{\|\left(1+
    \|f(\omega;\cdot)\|_{\mathrm{Lip}}\right)\abs{u_0}_{BV(\R)}(\omega)\|_{L^2(\Omega)}\\ 
  +\|A(u_0)_x\|_{L^2(\Omega;L^1(\R))}
  +\|\abs{f(u_0)-A(u_0)_x}_{BV(\R)}(\omega)\|_{L^2(\Omega)}\Bigr\}.
\end{multline}
On the other hand, in the deterministic case we have the convergence rates,
\begin{subequations}\label{eq:wratedet}
  \begin{align}\label{seq1:wratedet}
    \norm{u(\cdot,t)-u_\Delta(\cdot,t)}_{L^1(\R)} &\le C_T\Delta x^{1/3}
    \le
    C_T\,(W^{\rm{ex}}_{\Delta})^{-1/11},\\
    \label{seq2:wratedet}
    \norm{u(\cdot,t) -u_\Delta(\cdot,t)}_{L^1(\R)} &\le
    C_T\,(W^{\rm{im}}_{\Delta}(\log(W^{\rm{im}}_{\Delta}))^{-1})^{-1/6},
  \end{align}
\end{subequations}
with respect to work.

\subsection{Multilevel MC-FDM}
We next present and analyze a scheme that allows us to achieve almost
the accuracy versus work bound \eqref{eq:wratedet} of the
deterministic FDM also for the stochastic data $(u_0,f,A)$, rather
than the single level MC-FDM error bound (\ref{4.33}).  The key
ingredient in the Multilevel Monte Carlo Finite Difference (MLMC-FDM)
scheme is simultaneous MC sampling on different levels of resolution
of the FDM, {\em with level dependent numbers $M_\ell$ of MC samples}.
To define these, we introduce some notation.

\subsubsection{Notation}
The MLMC-FDM is defined as a {\em multilevel discretization} in $x$ and $t$ 
with level dependent numbers $M_\ell$ of samples. 
To this end, we assume we are given a family  
 of nested grids with cell sizes
 \begin{equation}\label{4.35}
   \Delta x_\ell=2^{-K\ell}\Dx_0,\quad  \ell \in \IN_0,
 \end{equation}
 for some $\Dx_0>0$, $K$ such that $2^{K}\in \IN\setminus
 \{0,1\}$. Similarly, we denote,
\begin{equation*}
\Dt_\ell=C \Dx^{8/3}_\ell,
\end{equation*}
the size of the time step for the explicit scheme corresponding to
grid size $\Dx_\ell$ and
\begin{equation*}
\Dt_\ell= \theta \Dx_\ell,
\end{equation*}
the size of the time step for the implicit scheme at level $\ell$. We
denote by $u_{\ell}$ the
approximation to \eqref{eq:1dmain} computed by \eqref{eq:EXP} or
\eqref{eq:IMP} on the grid with cell and time step size
$\Delta_\ell:=( \Dx_\ell, \Dt_\ell)$.

\subsubsection{Derivation of MLMC-FDM}
As in plain MC-FDM, our aim is to estimate, for $0 < t < \infty$, the
expectation (or ``ensemble average'') $\IE[u(\cdot,t)]$ of the random
entropy solution of \eqref{eq:RSCL} with random data
$(u_0,f,A)(\omega)$, $\omega \in \Omega$, satisfying \eqref{eq:rv} --
\eqref{3.16} for sufficiently large values of $k$ (to be specified in
the sequel).  As in the previous section, $\IE[u(\cdot,t)]$ will be
estimated by replacing $u(\cdot,t)$ by a FDM approximation.

We generate a sequence of approximations,
$\{u_\ell(\cdot,t)\}^\infty_{\ell = 0}$ on the nested meshes with cell
sizes $\Dx_\ell$, time steps of sizes $\Delta t_\ell$.  In the
following we set $u_{-1}(\cdot,t) : = 0$.  Then, given a {target} level $L \in
\IN$ of spatial resolution, we have
\begin{equation}\label{4.40}
  \IE[u_L(\cdot,t)] 
  = 
  \IE\Big[ \sum\limits^L_{\ell = 0} \,(u_\ell (\cdot,t) - u_{\ell -
    1}(\cdot,t))\Big] . 
\end{equation}
We next estimate each term in (\ref{4.40}) statistically by a 
MCM with a level-dependent number of samples, $M_\ell$; 
this gives the MLMC-FDM estimator
\begin{equation}\label{4.41}
  E^L[u(\cdot,t)] 
  = 
  \sum\limits^L_{\ell = 0} \,
  E_{M_\ell} [u_\ell (\cdot,t) - u_{\ell -1}(\cdot,t)]
\end{equation}
where $E_M[u_\Delta(\cdot,t)]$ is as in (\ref{4.25}), and where
$u_\ell(\cdot,t)$ is computed on the mesh with grid size $\Dx_\ell$
and time step $\Dt_\ell$.

Statistical moments $\cM^k(u)(t_1,...,t_k)$ of order $k\ge 2$
(resp. the $k$-th order space-time correlation functions) of the
random entropy solution $u$ can be estimated in the same way: based on
\eqref{4.26} in Definition \ref{def4.4}, the straightforward
generalization along the lines of the MLMC estimate \eqref{4.41} of
the MC estimate \eqref{4.27} for $\cM^k(u)(t)$ leads to the definition
of the MLMC-FDM estimator
\begin{equation} \label{eq:MLMCFTP} 
  E^{L,(k)}[u(\cdot,t)] :=
  \sum\limits^L_{\ell = 0} \,E_{M_\ell} [(u_\ell(\cdot,t))^{(k)} -
  (u_{\ell -1}(\cdot,t))^{(k)}]\;, \quad 0<t<\infty .
\end{equation}
This generalizes \eqref{4.41} to moments $\cM^k(u)(t)$ of order $k>1$.
\footnote{We assume here for notational convenience that
          $t_1 = t_2 = ... = t_k = t$.
          This implies that our $k$-th moment estimate
          only requires access to the FDM solutions at
          time $t$.
          The following developments directly generalize
          to the analysis of $k$-point
          temporal correlation functions
          of the random entropy solution as well; in this
          case, however, access to the full history of
          FDM solutions $v_\ell(\cdot,t)$ for $0\le t \le T < \infty$
          is required \cs{for the MC estimation of these correlations}.}

\subsubsection{Convergence Analysis}
We first analyze the MLMC-FDM mean field error
\begin{equation}\label{4.42}
  \norm{ \IE[u(\cdot,t)] - E^L[u(\cdot,t)] }_{L^2(\Omega;L^1(\R^d))}
\end{equation}
for $0 < t < \infty$ and $L \in \IN$.  In particular, we are
interested in the choice of the sample sizes $\{M_\ell\}^\infty_{\ell
  =0}$ such that, for every $L \in \IN$, the MLMC error (\ref{4.42})
is of order $(\Delta x_L)^{1/3}$.  The principal issue in the design
of MLMC-FDM is the optimal choice of $\{M_\ell\}^\infty_{\ell =0}$
such that, for each $L$, an error \eqref{4.42} is achieved with
minimal total work given by (based on \eqref{4.31}),
\begin{subequations}\label{4.43}
  \begin{align}\label{4.43a}
    W_{L,MLMC}^{\rm{ex}} &= C\dis\sum\limits^L_{\ell = 0}\,M_\ell
    W^{\rm{ex}}_{\Delta_\ell}
    =\mathcal{O}\left( \dis\sum\limits^L_{\ell = 0}\,M_\ell \Delta x_\ell^{-11/3}\right),\\
    \label{4.43b}
    W_{L,MLMC}^{\rm{im}} &= C\dis\sum\limits^L_{\ell = 0}\,M_\ell
    W^{\rm{im}}_{\Delta_\ell} =\mathcal{O}\left( \dis\sum\limits^L_{\ell = 0}\,M_\ell
    \Delta x_\ell^{-2}\log(\Dx_\ell^{-1})\right).
  \end{align}
\end{subequations}
To estimate \eqref{4.42}, we write (recall that $u_{-1} : = 0$)
using the triangle inequality, the linearity of the mathematical
expectation $\IE[\cdot]$ and the definition \eqref{4.41} of the 
MLMC estimator
\begin{align*}
  \|\IE[u(\cdot,t)] - &E^L[u(\cdot,t)]\|_{L^2(\Omega;L^1(\R))} \\
  &\le
  \norm{\IE[u(\cdot,t)] - \IE[u_L(\cdot,t)]}_{L^2(\Omega;L^1(\R))}
  + \norm{\IE[u_L(\cdot,t)] -
    E^L[u(\cdot,t)]}_{L^2(\Omega;L^1(\R))}
  \\
  &= \norm{\IE[u(\cdot,t)] - \IE[u_L (\cdot,t)]}_{L^2(\Omega;L^1(\R))}
  \\
  &\qquad +
  \Bigl\| \dis\sum\limits^L_{\ell = 0} \, \IE[u_\ell - u_{\ell - 1}] -
    E_{M_\ell} [u_\ell - u_{\ell - 1}] \Bigr\|_{L^2(\Omega;L^1(\R))}
  \\
  &=:\text{I}+\text{II}
\end{align*}
We estimate terms I and II separately.  
By linearity of the expectation, \cs{term} I equals
\begin{equation*}
  \text{I}
  =
  \norm{\IE[u(\cdot,t) - u_L(\cdot,t)]}_{L^1(\R)}
  = 
  \norm{u(\cdot,t) - u_L(\cdot,t)}_{L^1(\Omega;L^1(\R))}
\end{equation*}
which can be bounded by \eqref{4.22d} with $k=1$. 
We hence focus on \cs{term} II, i.e.,
\begin{align*}
  \text{II} &\le \sum\limits^L_{\ell = 0}
  \norm{\IE[(u_\ell-u_{\ell-1})(\cdot,t)]-E_{M_\ell}
  [(u_\ell-u_{\ell-1})(\cdot,t)]}_{L^2(\Omega;L^1(\R))}\\ 
  & \stackrel{\eqref{4.6}}{\le} \sum\limits^L_{\ell = 0}
  M_\ell^{-\frac{1}{2}}
  \Bigl(\int_\Omega \|u_\ell (\cdot,t; \omega) - u_{\ell -1} (\cdot,t;
  \omega)\|^2_{L^1(\R)} \,d\IP(\omega)\Bigr)^{\frac{1}{2}}\\ 
  & =\sum\limits^L_{\ell = 0} \,M_\ell^{-\frac{1}{2}}
  \norm{u_\ell(\cdot,t) - u_{\ell - 1}(\cdot,t)}_{L^2(\Omega; L^1(\R))}.
\end{align*}
We estimate for every $\ell \ge 0$ the size of the 
detail $u_\ell - u_{\ell-1}$ with the triangle inequality 
\begin{equation*}
  \norm{u_\ell(\cdot,t) - u_{\ell -1}(\cdot, t)}_{L^2(\Omega;
    L^1(\R))} \le \norm{u(\cdot,t) - u_\ell(\cdot,t)}_{L^2(\Omega;
    L^1(\R))}  + \norm{u(\cdot,t) - u_{\ell -
      1}(\cdot,t)}_{L^2(\Omega; L^1(\R))}.
\end{equation*}
Using here (\ref{4.22d}) with $\ov{t} = t$, $k = 2$, \eqref{eq:const}
and \eqref{4.35}, we obtain for every $\ell \in \IN$ the estimate
\begin{align*}
  \norm{(u_\ell - u_{\ell -1})(\cdot, t)}_{L^2(\Omega; L^1(\R))} &\le
  \norm{u_0 - u_\ell(\cdot;\cdot,0)}_{L^2(\Omega; L^1(\R))}+
  \norm{u_0 - u_{\ell - 1}(\cdot;\cdot,0)}_{L^2(\Omega; L^1(\R))}\\
  &\qquad +  \CutAf \, (1+2^{K/3})\, \Delta x_\ell^{1/3}.
\end{align*}
Using that for $0 \le s \le 1$, the cell-averages
$u_\ell(\cdot;\cdot,0)$ satisfy, for every $k \in \IN$ and for every
$1 \le q \le \infty$,
\begin{equation*}
\norm{u_0 - u_\ell(\cdot;\cdot,0)}_{L^k (\Omega; L^q(\R))} 
\le 
C \Delta x_\ell^s \norm{u_0}_{L^k(\Omega; W^{s,q}(\R))}\,,
\end{equation*}
we arrive at the error bound
\begin{equation*}
  \norm{u_\ell(\cdot,t) - u_{\ell -1}(\cdot, t)}_{L^2(\Omega; L^1(\R))} 
  \le
  \left(  \CutAf \left(1+2^{K/3}\right)+ 
    C \Dx_\ell^{2/3}\norm{\abs{u_0}_{BV(\R)}}_{L^2(\Omega)}\right) 
  \Delta x_\ell^{1/3}.
\end{equation*}
Summing this error bound over all discretization levels $\ell=0,...,L$,
we prove the main result of the present paper.
\begin{theorem}\label{theo4.7}
  Assume \eqref{eq:assu} -- \eqref{3.16} for some $k\ge 2$ and
  \eqref{4.35}.  Then, for any sequence $\{M_\ell\}^\infty_{\ell = 0}$
  of sample sizes at mesh level $\ell$, we have for the MLMC-FDM
  estimate $E^L[u(\cdot,t)]$ in \eqref{4.41} the error bound
  \begin{equation}\label{4.47}
    \begin{aligned}
      \Bigl\|\IE[u(\cdot,t)] - &E^L[u(\cdot,t)]\Bigr\|_{L^2(\Omega;L^1(\R))}\\
      &\le C \left\{  \CutAf^1 \Delta x_L^{1/3} + \Delta x_L
        \norm{\abs{u_0}_{BV(\R)}}_{L^1 (\Omega)} \right\}
      \\
      &\quad + C\left\{\sum\limits^L_{\ell = 0} M_\ell^{-1/2} \Delta
        x_\ell^{1/3}\right\} \left( \CutAf^2
        (1+2^{K/3})+\Dx_\ell^{2/3}
        \norm{\abs{u_0}_{BV(\R)}}_{L^2(\Omega)}\right)
    \end{aligned}
  \end{equation}
  where we have denoted
  \begin{multline*}
     \CutAf^j = C(1+t)\Bigl\{
    \norm{\left(1+\norm{f(\cdot;\cdot)}_{\mathrm{Lip}}\right)
      \abs{u_0}_{BV(\R)}}_{L^j(\Omega)}
    \\
    +\norm{A(u_0)_x}_{L^j(\Omega;L^1(\R))}
    +\norm{\abs{f(u_0)-A(u_0)_x}_{BV(\R)}}_{L^j(\Omega)}\Bigr\}.
  \end{multline*}
  $j=1,2$ and $C>0$ is a constant that is independent of the parameters 
  \cs{ $u_0$, $f$, $t$ and $A$.}
\end{theorem}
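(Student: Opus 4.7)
The plan is to assemble the bound from the skeleton already laid out before the statement. First I would split the MLMC error via the triangle inequality and linearity of the expectation into a bias part (term I) and a sampling part (term II):
\begin{equation*}
\norm{\IE[u(\cdot,t)] - E^L[u(\cdot,t)]}_{L^2(\Omega;L^1(\R))}
\le \norm{\IE[u-u_L](\cdot,t)}_{L^2(\Omega;L^1(\R))}
+ \sum_{\ell=0}^L \norm{(\IE - E_{M_\ell})[u_\ell - u_{\ell-1}](\cdot,t)}_{L^2(\Omega;L^1(\R))},
\end{equation*}
with the convention $u_{-1}\equiv 0$.

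For term I, since the expectation lies in a deterministic space I pull the $L^1(\R)$-norm inside and apply \eqref{4.22d} with $k=1$. This produces the $\Dx_L^{1/3}$ contribution carrying the constant $\CutAf^1$ (the $L^1(\Omega)$-version of \eqref{eq:const}) plus an initial-data projection error $\norm{u_0 - u_L(\cdot;\cdot,0)}_{L^1(\Omega;L^1(\R))}$. I would then invoke the standard cell-averaging bound for $BV$ functions,
\begin{equation*}
\norm{u_0 - u_\ell(\cdot;\cdot,0)}_{L^k(\Omega;L^1(\R))} \le C \Dx_\ell \norm{\abs{u_0}_{BV(\R)}}_{L^k(\Omega)},
\end{equation*}
with $k=1$, producing the $\Dx_L \norm{\abs{u_0}_{BV(\R)}}_{L^1(\Omega)}$ term in \eqref{4.47}.

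For term II, I apply the single-level MC bound \eqref{4.6} to each detail $u_\ell - u_{\ell-1}$, yielding
\begin{equation*}
\text{II} \le 2\sum_{\ell=0}^L M_\ell^{-1/2} \norm{u_\ell(\cdot,t) - u_{\ell-1}(\cdot,t)}_{L^2(\Omega;L^1(\R))}.
\end{equation*}
For each detail norm, I insert the exact random entropy solution $u$ via the triangle inequality and estimate both $\norm{u - u_\ell}$ and $\norm{u - u_{\ell-1}}$ using Proposition \ref{prop4.3}, equation \eqref{4.22d}, with $k=2$. The nested mesh relation \eqref{4.35} gives $\Dx_{\ell-1}^{1/3} = 2^{K/3}\Dx_\ell^{1/3}$, which is the origin of the factor $(1+2^{K/3})$ multiplying $\CutAf^2$. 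The initial-data errors at the two adjacent levels are again controlled by the $BV$ cell-averaging estimate with $k=2$, contributing the $\Dx_\ell^{2/3}\norm{\abs{u_0}_{BV(\R)}}_{L^2(\Omega)}$ correction (an extra $\Dx_\ell^{1/3}$ is absorbed into the displayed $\Dx_\ell^{1/3}$ prefactor). Summing over $\ell$ produces exactly the second brace in \eqref{4.47}.

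The main technical point is purely bookkeeping: one must keep track of which moment order the data appears in, so that $\CutAf^1$ shows up in the deterministic-bias term (coming from a single application of \eqref{4.22d} with $k=1$) while $\CutAf^2$ shows up in the variance-type sum (coming from \eqref{4.22d} with $k=2$ applied on two consecutive levels). Finiteness of both constants is guaranteed by the \`a priori bounds \eqref{eq:assu2}, \eqref{eq:assf}, \eqref{eq:assA2}, \eqref{eq:assAf}, which hold $\IP$-almost surely with deterministic constants, together with the moment condition \eqref{3.16}; therefore all $L^1(\Omega)$- and $L^2(\Omega)$-norms appearing in $\CutAf^j$ are automatically finite. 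Collecting the contributions of terms I and II gives the bound \eqref{4.47} with a constant $C$ depending only on the CFL constant and the mesh refinement factor, independent of $u_0$, $f$, $A$ and $t$.
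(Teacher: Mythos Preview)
Your proposal is correct and follows essentially the same approach as the paper: the same I/II splitting via the triangle inequality, the same application of \eqref{4.22d} with $k=1$ for the bias term and $k=2$ for each level detail (after inserting the exact solution), the same origin of the $(1+2^{K/3})$ factor from \eqref{4.35}, and the same $BV$ cell-averaging estimate to handle the initial-data projection errors. There is nothing to add.
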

\cs{
The upper bound obtained in}
Theorem~\ref{theo4.7} is the basis for an optimization of the numbers
$M_\ell$ of MC samples across the mesh levels. 
Our selection of the
level dependent Monte Carlo sample sizes $M_\ell$ will be based on the
last term in the error bound \eqref{4.47}; we select in \eqref{4.47}
the $M_\ell$ such that as $\Delta \downarrow 0$, all terms equal the
error estimate $\Delta x_L^{1/3}$ at the finest level $L$.  This
motivates choosing $M_\ell$ such that
\begin{equation*}
  M_\ell^{-\frac{1}{2}}   \Delta x_\ell^{1/3} = \hat{C}\Delta x_L^{1/3}, 
\quad
\ell = 0,\ldots , L-1\,.
\end{equation*}
Here, $\hat{C}$ is some positive integer that is independent of
$\ell$, $L$.  
Using 
\begin{equation*}
  \Delta x_\ell = 2^{-\ell K}\Dx_0, \quad \ell = 0,1,2,\ldots,
\end{equation*}
we find 
$M_\ell = \hat{C}\Delta x_\ell^{2/3} \Delta x_L^{-2/3} = \widetilde{C}\, 2^{2K(L - \ell)/3}$.  
This implies in \eqref{4.47} the bound
\begin{equation}\label{4.49}
  \norm{\IE[u(\cdot,t)] - E^L[u(\cdot,t)]}_{L^2(\Omega;L^1(\R))} 
  \le (L+1)(1+2^{K/3}) \whCutAf \Delta x_L^{1/3},
\end{equation}
where 
$\whCutAf = \max_{j\in\{1,2\}}\{
 \CutAf^j+\|\abs{u_0}_{BV(\R)}\|_{L^j(\Omega)}\}$, 
while the total cost is, using \eqref{4.43}, bounded by
\begin{subequations}\label{4.50}
  \begin{align}\label{eq:4.50a}
    W_{L,MLMC}^{\rm{ex}} &\leq
    C\sum\limits^L_{\ell = 0} M_\ell \Dx_\ell^{-11/3} 
    =C\sum\limits^L_{\ell = 0} 2^{2KL/3+3\ell K}
    =C 2^{11 K L/3}= \mathcal{O}\left(\Dx_L^{-11/3}\right),
    \\
    \label{4.43c}
    W_{L,MLMC}^{\rm{im}} &\leq
    C \sum\limits^L_{\ell = 0} M_\ell \Delta
    x_\ell^{-2}\log(\Dx_\ell^{-1})
    =CK \log(2) 2^{2KL/3} \sum\limits^L_{\ell = 0} \ell 2^{4K\ell/3}
    \\
    &= C 2^{2KL}\log(2^{KL})=\mathcal{O}\left( \Dx_L^{-2}\log(\Dx_L^{-1})\right)\notag.
  \end{align}
\end{subequations}
We observe that this is asymptotically the same work as the one needed
for one deterministic approximation of \eqref{eq:1dmain} using
\eqref{eq:EXP} or \eqref{eq:IMP} with grid size $\Dx_L$ and
corresponding time step
$\Dt_L$.

Inserting \eqref{4.50} into the asymptotic error bound \eqref{4.49},
we obtain the following error estimate in terms of work
\begin{subequations}\label{eq:MLMCFVMerrWork}
  \begin{align}\label{eq:mlmcwexp}
    &\norm{\IE[u(\cdot,t)] - E^L[u(\cdot,t)]}_{L^2(\Omega;L^1(\R))}
    \le (L+1)\left(1+2^{K/3}\right) \whCutAf
    (W^{\rm{ex}}_{L,MLMC})^{-1/11},
    \\ 
    \label{eq:mlmcwimp}
    &\norm{\IE[u(\cdot,t)] - E^L[u(\cdot,t)]}_{L^2(\Omega;L^1(\R))} 
    \\
    &\qquad\le (L+1)\left(1+2^{K/3}\right)
    \whCutAf (W^{\rm{im}}_{L,MLMC}
    (\log(W^{\rm{im}}_{L,MLMC}))^{-1})^{-1/6}\notag. 
  \end{align}
\end{subequations}
%
We observe that 
the MLMC-FDM (\ref{4.50}) behaves, 
in terms of accuracy versus work,
as $L \rt \infty$, as the 
deterministic FDM up to $\log$-terms,
where the error vs. work was estimated in \eqref{eq:wratedet}. 
Now one can balance $L$ and $K$ in order to obtain as small a 
constant as possible.

\section{Numerical Experiments}
\label{sec:NumExp}
In this section, we will test the method on some numerical examples
from two-phase flow in porous media. In one space dimension, the time 
evolution of the water saturation $s:=s^w\in [0,1]$ can be modeled by
the conservation law 
\begin{align}\label{eq:sw}
  \begin{split}
    s_t+f(s)_x=(a(s)s_x)_x,&\quad (t,x)\in [0,T]\times D,\\
    s(0,x)=s_0(x),&\quad x\in D,
  \end{split}
\end{align}
where $D\in \R$ is a bounded interval, 
$f$ and $a$ are of the form
\begin{equation}\label{eq:fa}
  f(s)=q\frac{\lambda^w(s)}{\lambda^w(s)+\lambda^o(s)},\quad a(s)=\nu
  \overline{K}\frac{\lambda^w(s)\lambda^o(s)}{\lambda^w(s)+\lambda^o(s)} p_c'(s) 
\end{equation}
where $q$ denotes total flow rate, 
$\overline{K}$ the rock permeability (we will set
$\overline{K}=q=1$ for simplicity), 
$\nu$ a small number, and $p_c$ the capillary
pressure for which we will use the expression
\begin{equation*}
  p_c(s)=-\left(s^{-4/3}-1\right)^{1/4},
\end{equation*}
which is taken from \cite{cappressure}, and $\lambda^w$, $\lambda^o$ are
the phase mobilities/relative permeabilities of the water and the oil
phase respectively. The relative permeability of the water phase
$\lambda^w$ is a monotone function with $\lambda^w(0)=0$,
$\lambda^w(1)=1$, and the relative permeability of the oileic phase
$\lambda^o$ is a monotone decreasing function such that
$\lambda^o(0)=1$ and $\lambda^o(1)=0$. Often one uses the simple expressions
\begin{equation*}
  \lambda^w(s)=s^2,\quad \lambda^o(s)=(1-s)^2.
\end{equation*}
Such a form of the relative permeability is of course a
simplification, and more accurate models are based on experiments, and
these functions therefore have some uncertainty associated with them.
Hence it is natural to model the relative permeabilities as random
variables.
Equations \eqref{eq:sw} have to be augmented with suitable boundary
conditions. 
In the ensuing numerical experiments,
we use the domains $D = (0,2)$ and $D=(0,0.5)$ and
periodic boundary conditions, in order to avoid issues
related to unbounded domains or to boundary effects.

\subsection{Random exponent}
\label{sec:RndExp}
For this example we will model the relative permeabilities by
\begin{equation}\label{eq:relpermexp}
  \lambda^w(s)=|s|^{p(\omega)},\quad
  \lambda^o(s)=|1-s|^{p(\omega)},
\end{equation}
where the random exponent $p$ is uniformly distributed in the interval
$[1.5,2.5]$. As initial data, we use
\begin{equation}\label{eq:u02}
  s_0(x)=\begin{cases}
    0.1, & x\in [0,0.1)\cup [1,2),\\
    0.8, & x\in [0.1,1),
  \end{cases}
\end{equation}
and periodically extended outside $[0,2]$.  Figure~\ref{fig:eg1} shows
a sample $s(\omega;t,\cdot)$ of the random entropy solution at time
$T=0.3$, and an estimate of the mean $\mathbb{E}[s(\cdot,0.3)]$
computed by the explicit multilevel Monte Carlo finite difference
method with $M_0=8$, $L=8$, $\Delta x_0=2^{-3}$, $K=1$ and CFL-number
$0.4$.
\begin{figure}[ht]
  \begin{tabular}{lr}
    \includegraphics[width=0.49\textwidth]{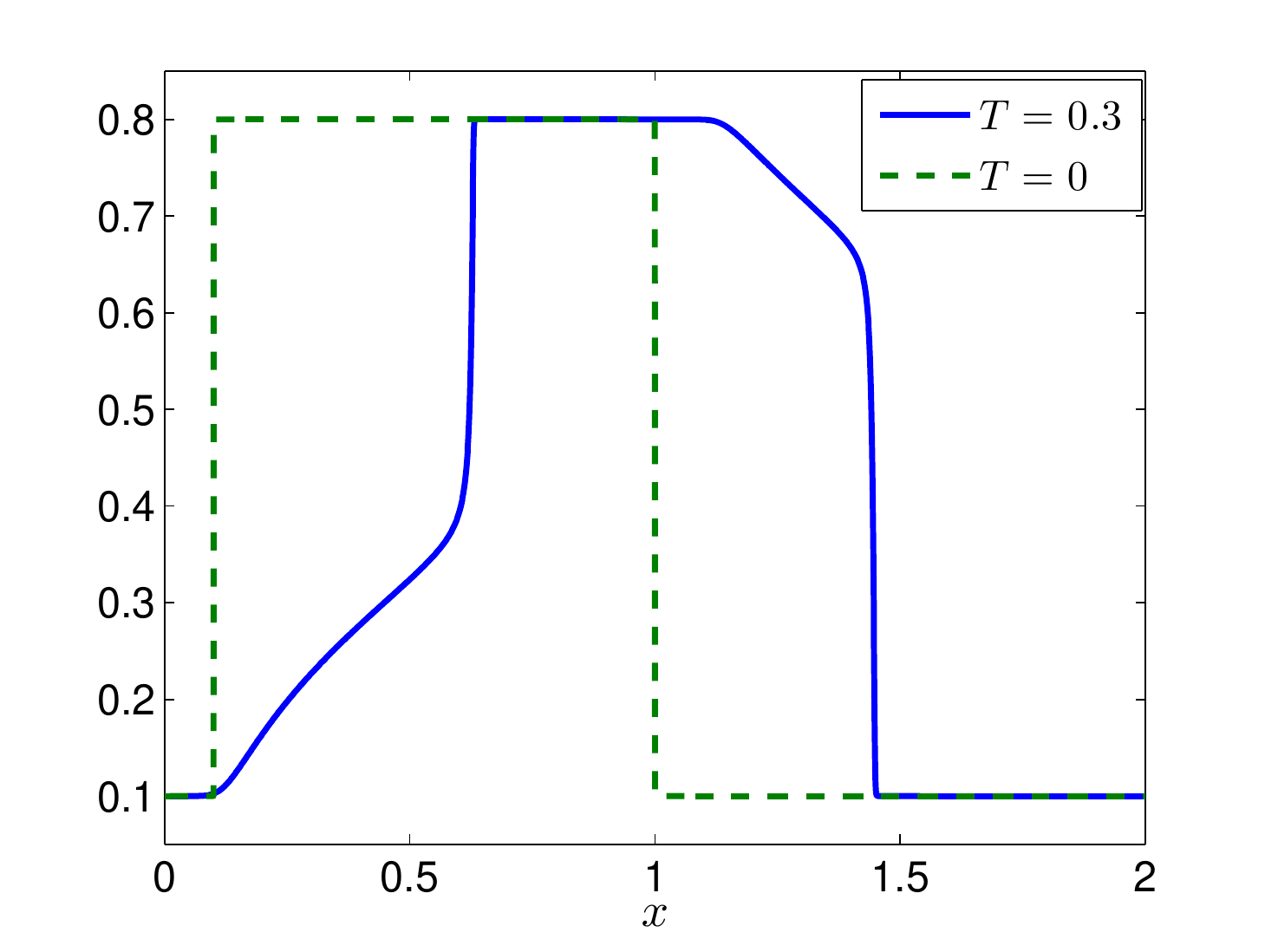}
    &\includegraphics[width=0.49\textwidth]{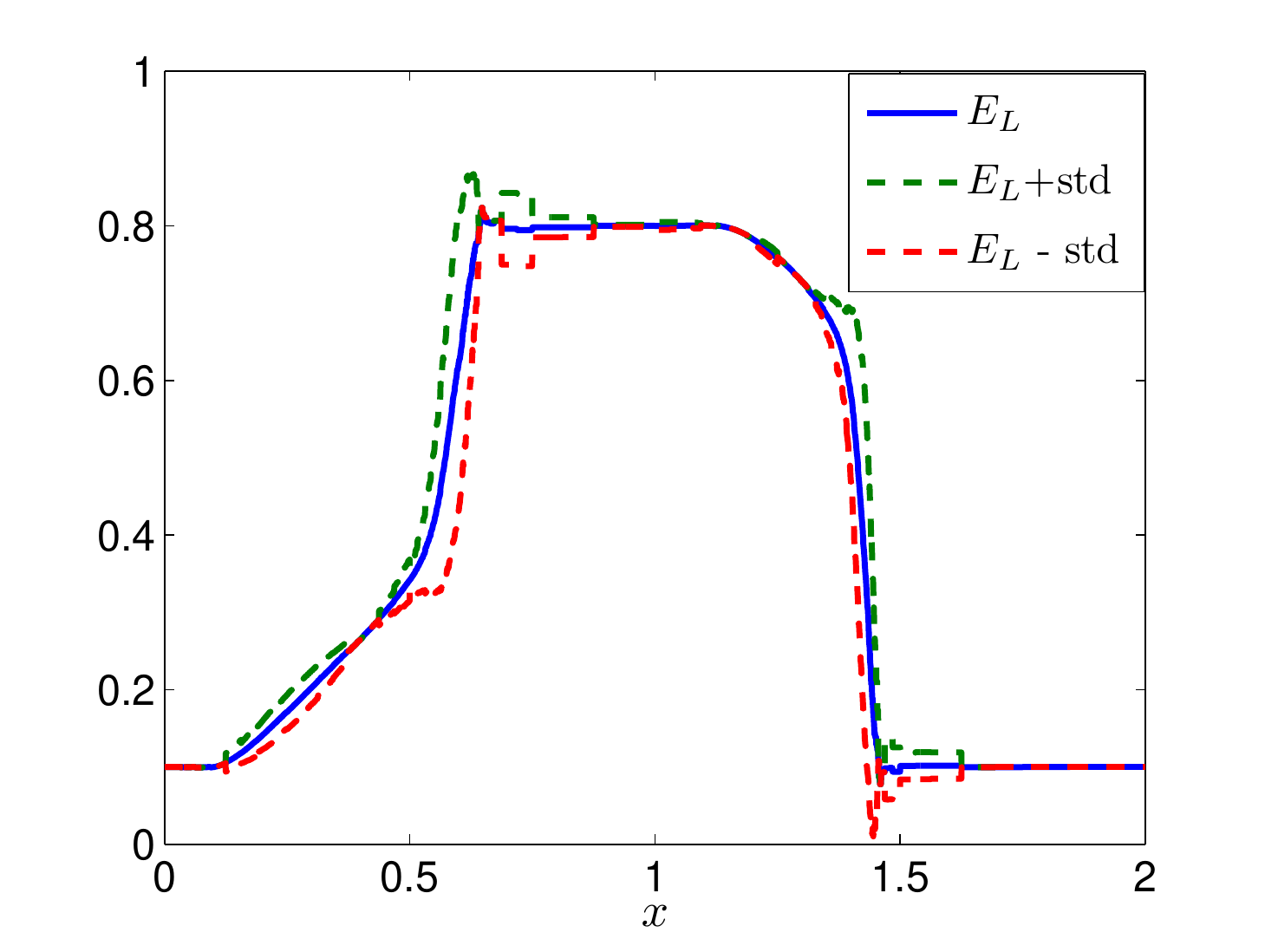}
  \end{tabular}
  \caption{Left: One sample of the random entropy solution of
    \eqref{eq:sw} with \eqref{eq:u02}, \eqref{eq:fa} and
    \eqref{eq:relpermexp} at time $T=0.3$ computed on a mesh with
    $4096$ points. Right: A sample of the estimator $E^L[s(\cdot,t)]$
    for \eqref{eq:sw} with \eqref{eq:u02}, \eqref{eq:fa} and
    \eqref{eq:relpermexp} at time $T=0.3$ (solid line), the dashed
    lines denote $E^L[s(\cdot,t)]$ $\pm$ standard deviation.}
  \label{fig:eg1}
\end{figure}
We will use this sample of the MLMC estimator as a reference solution
when estimating the approximation errors and computing the convergence
rates.

In order to compute an estimate on the error of the approximation of
the mean by the MLMC estimator $E^L[s(t)]$ in the $L^2(\Omega;
L^1(\R))$-norm, we use the relative error estimator introduced
in \cite{MishSch10a} based on a Monte Carlo quadrature in the
stochastic domain: By $U_{\mathrm{ref}}$ we denote a reference solution
and $\{U_k\}_{k=1,\dots,N}$ a sequence of independent approximate
solutions $E^L[s(t)]$ obtained by running the MLMC-FDM solver $N$
times, corresponding to $N$ realizations in the stochastic
domain. Then we estimate the relative error by
\begin{equation}\label{eq:errest1}
  \mathcal{R}E=\sqrt{\sum_{k=1}^N (\mathcal{R}E_k)^2/N},
\end{equation}
where
\begin{align*}
  \mathcal{R} E_k=100 \times
  \frac{\|U_{\mathrm{ref}}-U_k\|_{\ell^1}}{\|U_{\mathrm{ref}}\|_{\ell^1}}
\;.
\end{align*}
In \cite{MishSch10a}, the sensitivity of the error with respect to the
parameter $N$ is investigated.  In the present numerical experiments,
we use $N=5$ which was shown to be sufficient for most problems
\cite{MishSch10a, MishSchSuk12}.  In Table~\ref{tab:eg1} the errors
\eqref{eq:errest1} versus the resolution $\Delta x_L$ at the finest
level $L$ of the MLMC estimator and versus the average time (in
seconds) needed to compute one sample of the MLMC estimator are shown
($L=0,\dots,5$).  We observe that the calculated convergence rates are
$\approx 0.66$ (explicit scheme) and $\approx 0.75$ (implicit scheme)
with respect to the resolution and $\approx 0.32$ (explicit scheme)
and $0.38$ (implicit scheme) with respect to work. This is better than
what we would expect from the theory, cf. \eqref{4.49} and
\eqref{eq:mlmcwexp}, \eqref{eq:mlmcwimp}.  However, they decrease as
we refine the mesh, which might indicate that we are not in the
asymptotic regime yet.
\begin{table}[h]
  \centering
  \begin{tabular}[h]{|c|c|c|c|c|c|}
    \hline
    $L$ &  $\mathcal{R}E$ & $\Dx_L$ & run time &  $\abs{E^L(s(t))}_{BV([0,2])}$ & $\|E^L(s(t))\|_{L^{\infty}([0,2])}$ \\
    \hline
$0$ & 16.54 & 	$2^{-3}$  &  0.45  & 1.37       & 0.79\\
$1$ &  10.25  & $2^{-4}$  &  2.48  & 1.4	& 0.8\\
$2$ & 6.13 & 	$2^{-5}$  & 10.31  & 1.42	& 0.81\\
$3$ & 3.53 & 	$2^{-6}$  & 40.58  & 1.44	& 0.81 \\
$4$ & 2.06  & 	$2^{-7}$  &  159.6 & 1.48	& 0.81\\
$5$ &  1.68 & 	$2^{-8}$  &  632.37 &  1.58	& 0.82 \\
	\hline
average rate &  &   0.66  & -0.32 & &\\
    \hline
  \end{tabular}
  \caption{Relative mean square errors (as defined in \eqref{eq:errest1}) versus grid size at highest level and time (in seconds), for problem \eqref{eq:sw} with \eqref{eq:u02}, \eqref{eq:fa} and \eqref{eq:relpermexp}, for the MLMC solver with the explicit difference scheme.}
  \label{tab:eg1}
\end{table}
\begin{table}[h]
  \centering
  \begin{tabular}[h]{|c|c|c|c|c|c|}
 \hline
    $L$ &  $\mathcal{R}E$ & $\Dx_L$ & run time &  $\abs{E^L(s(t))}_{BV([0,2])}$ & $\|E^L(s(t))\|_{L^{\infty}([0,2])}$ \\
    \hline
$0$ & 22.89  & 	$2^{-3}$  &  0.65     & 1.3     & 0.76 \\ 
$1$ &  14.69 &  $2^{-4}$  &  3.53   &   1.39	&  0.8 \\
$2$ &  9.27  & 	$2^{-5}$  &  13.29 &    1.43	&  0.81\\
$3$ &  5.57  & 	$2^{-6}$  &   47.7 &  1.44	& 0.81 \\
$4$ &   3.15 & 	$2^{-7}$  & 174.8  &   1.47	& 0.81 \\
$5$ &  1.68 & 	$2^{-8}$  &  659.7  &  1.5	& 0.81 \\
	\hline
average rate & & 0.75 & -0.38 & & \\
\hline
  \end{tabular}
  \caption{Relative mean square errors (as defined in \eqref{eq:errest1}) 
           versus grid size at highest level and {CPU} time (in seconds), 
           for problem \eqref{eq:sw} with \eqref{eq:u02}, \eqref{eq:fa} and 
           \eqref{eq:relpermexp}, for the MLMC solver with \cs{time-stepping by}
           the implicit difference scheme.
          }
  \label{tab:eg1_2}
\end{table}
%
%
In the last two columns of Tables \ref{tab:eg1}, \ref{tab:eg1_2} the average total
variation and $L^\infty$-norm of $E^L(s(t))$ at the different
refinement levels are given. 
We observe that they slightly increase, 
but not as much as the bounds
\begin{align*}
  \abs{E^L(s(t))}_{BV(\R)}&=\biggl| \sum_{\ell=0}^L\frac{1}{M_\ell}
  \sum_{i=0}^{M_\ell}(s_\ell^i-s_{\ell-1}^i)\biggr|_{BV(\R)}
  \le  \sum_{\ell=0}^L\frac{1}{M_\ell}\sum_{i=0}^{M_\ell}
  \abs{s_\ell^i-s_{\ell-1}^i}_{BV(\R)}\\
  &\le 2 (L+1)\, \abs{s_0}_{BV(\R)}
\\
  \norm{E^L(s(t))}_{L^{\infty}(\mathbb{R})} 
  & = 
  \biggl\|
  \sum_{\ell=0}^L\frac{1}{M_\ell}\sum_{i=0}^{M_\ell}
  (s_\ell^i-s_{\ell-1}^i)\biggr\|_{L^\infty(\IR)}
  \le \sum_{\ell=0}^L\frac{1}{M_\ell}\sum_{i=0}^{M_\ell} 
  \norm{s_\ell^i-s_{\ell-1}^i}_{L^\infty(\R)}
\\
  &\le 2 (L+1)\, \norm{s_0}_{L^\infty(\R)},
\end{align*}
would imply.

\subsection{Random residual saturation}\label{ssec:residual}
In the following numerical example, we will model the relative
permeabilities by the random variables
\begin{align}\label{eq:relpermres}
  &\lambda^w(s)=\mathbf{1}_{s>s_{w}^*(\omega_1)} (s)
  \frac{\left(s-s_{w}^*(\omega_1)\right)^2}{\left(1-s_{w}^*(\omega_1)\right)^2},\quad
  \lambda^o(s)=\mathbf{1}_{s\le s_{o}^*(\omega_2)} (s)
  \left(1-\frac{s}{s_{o}^*(\omega_2)}\right)^2,\\ 
  &\notag\\
  &\text{with}\quad s^*_w(\omega_1)\sim \mathcal{U}(0.05,0.35),\quad
  s^*_o(\omega_2)\sim \mathcal{U}(0.6,0.95),\quad s^*_w(\omega_1)\perp
  s^*_o(\omega_2),\notag
\end{align}
that is we assume that the residual saturations $s^*_w$, $s^*_o$ are
independent, uniformly distributed random variables. As initial
data, we use again use \eqref{eq:u02} with periodic boundary
conditions.

The resulting $(s_0,f,A)(\omega_1,\omega_2;\cdot)$ again satisfies
assumptions \eqref{eq:rv} -- \eqref{eq:assAf}, so that the random
entropy solution from Definition \ref{def:res} exists and Theorems
\ref{theo3.2}, \ref{theo4.7} apply. In Figure \ref{fig:eg2} on the
left hand side, we have plotted a sample $s(\omega;t,\cdot)$ of the
random entropy solution at time $T=0.3$ and on the right hand side we
have plotted a sample of the MLMC-FDM estimator $E^L(s(t))$ for
$M_0=8$, $L=8$, $\Delta x_0=2^{-3}$, $K=1$ and CFL-number $0.4$. We
observe that the variance is larger compared to the variance in the
previous example.
\begin{figure}[ht]
  \begin{tabular}{lr}
    \includegraphics[width=0.49\textwidth]{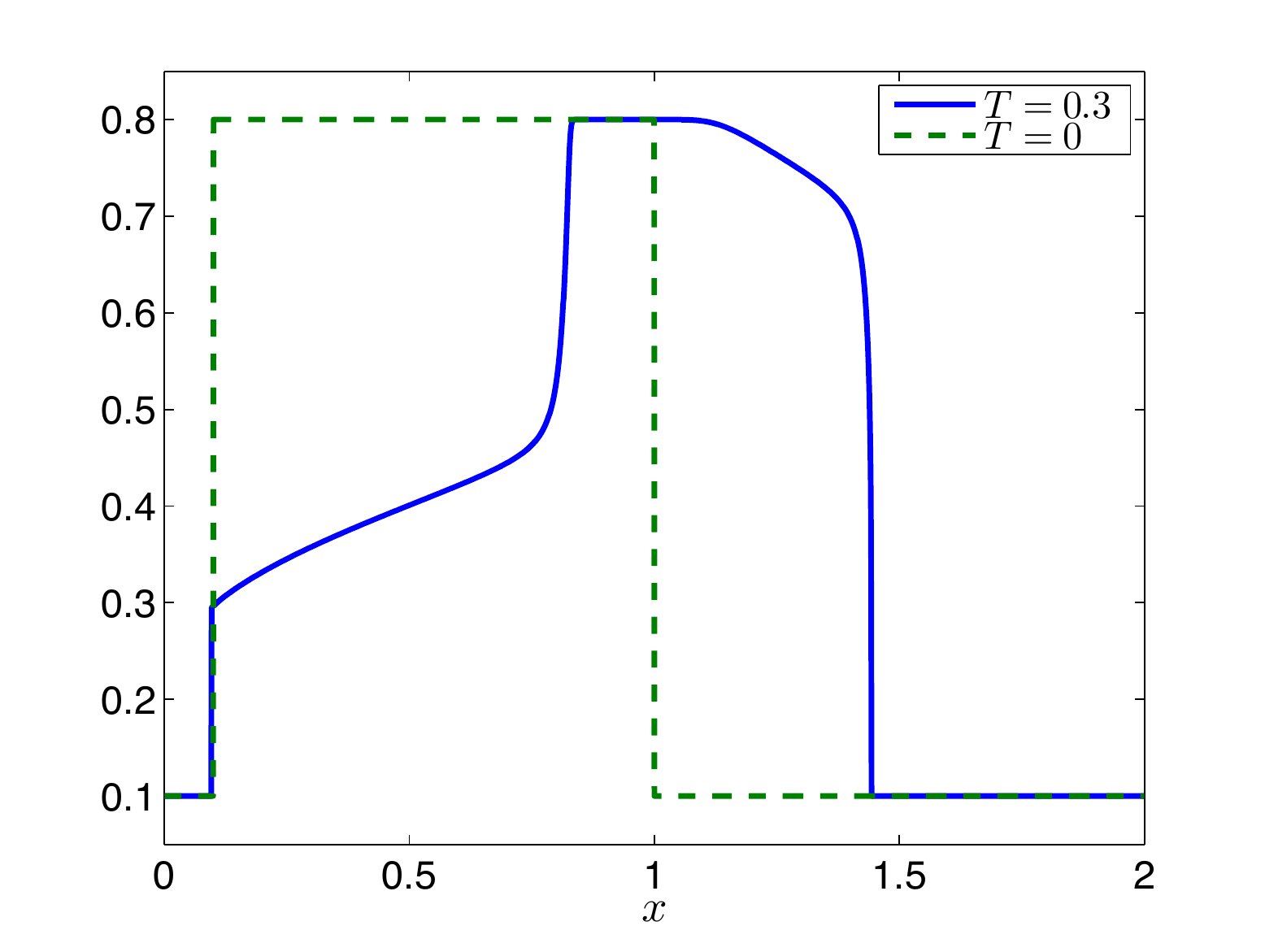}
    &\includegraphics[width=0.49\textwidth]{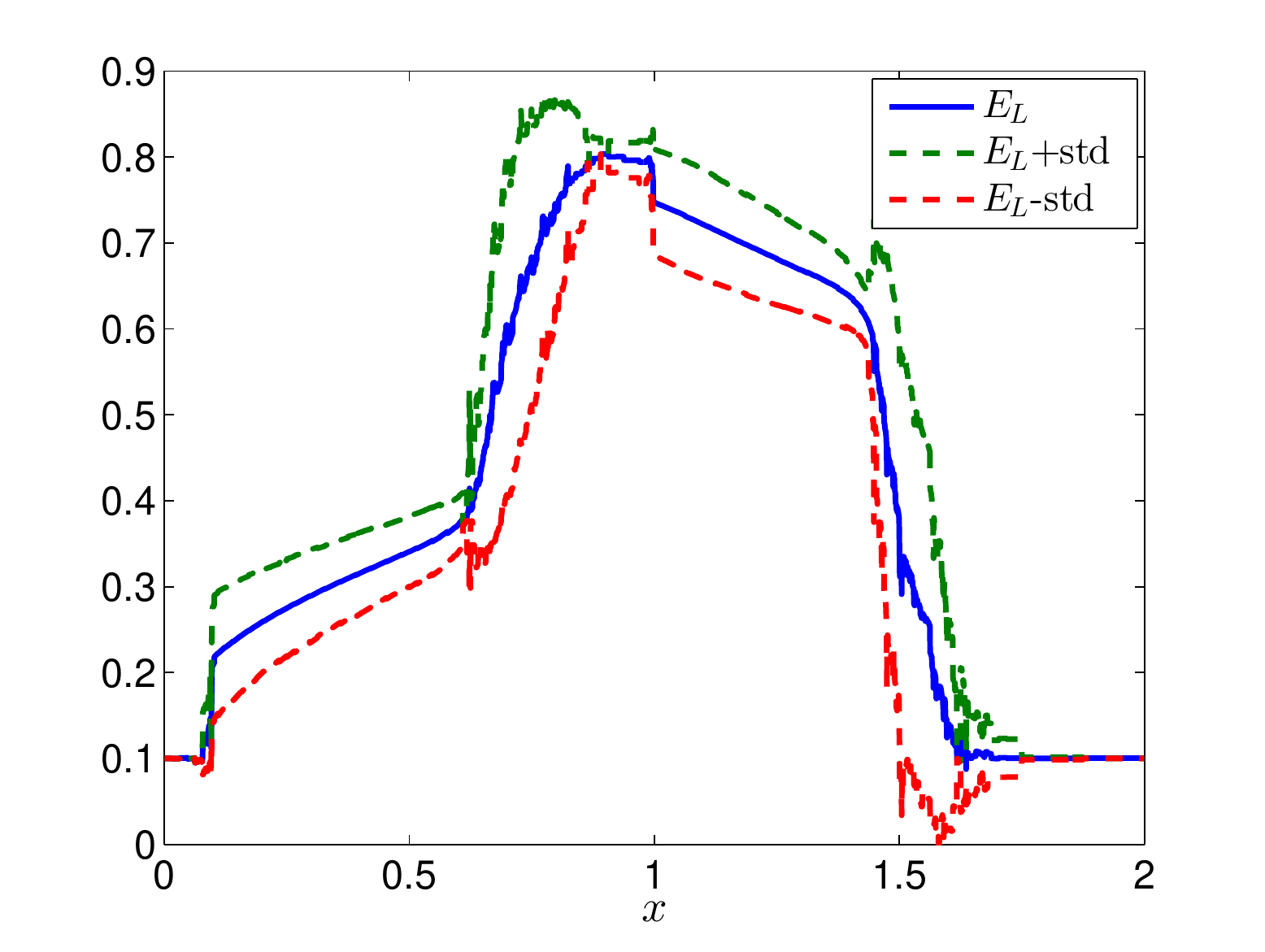}
  \end{tabular}
  \caption{Left: One sample of the random entropy solution of
    \eqref{eq:sw} with \eqref{eq:u02}, \eqref{eq:fa} and
    \eqref{eq:relpermres} at time $T=0.3$ computed on a mesh with
    $4096$ points. Right: A sample of the estimator $E^L[s(\cdot,t)]$
    for \eqref{eq:sw} with \eqref{eq:u02}, \eqref{eq:fa} and
    \eqref{eq:relpermres} at time $T=0.3$ (solid line), the dashed
    lines denote $E^L[s(\cdot,t)]$ $\pm$ standard deviation.}
  \label{fig:eg2}
\end{figure}
We will use this sample of the MLMC estimator as a reference solution when estimating the approximation errors and computing the convergence rates. Moreover, we will again compute an estimate of the $L^2(\Omega; L^1(\R))$-error using the error estimator defined in \eqref{eq:errest1} with $N=5$.

In Tables~\ref{tab:eg2}, \ref{tab:eg2_2}  the errors \eqref{eq:errest1} versus the
resolution $\Delta x_L$ at the finest level $L$ of the MLMC estimator
and versus the average time (in seconds) needed to compute one sample
of the MLMC estimator are shown ($L=0,\dots,5$). 
We observe that the
approximate convergence rates are $\approx 0.43$ (explicit scheme) and $\approx 0.57$ (implicit scheme) with respect to the
resolution and $\approx 0.21$ (explicit scheme) and $\approx 0.37$ (implicit scheme) with respect to work, which is again
better than what we would expect from the theory, cf. \eqref{4.49} and
\eqref{eq:mlmcwexp}, \eqref{eq:mlmcwimp}. However, it decreases as we refine the mesh,
which might indicate that we are not in the asymptotic regime yet. We
also note that the rates are lower than in the previous example.
\begin{table}[h]
  \centering
  \begin{tabular}[h]{|c|c|c|c|c|c|}
    \hline
    $L$ &  $\mathcal{R}E$ & $\Dx_L$ & run time & $\abs{E^L(s(t))}_{BV([0,2])}$ & $\|E^L(s(t))\|_{L^{\infty}([0,2])}$ \\
    \hline
$0$ & 12.36 & 	$2^{-3}$  &  0.48 & 1.31 & 0.76\\
$1$ & 8.67  & 	$2^{-4}$  &  2.69  & 1.38 & 0.78\\
$2$ & 5.44  & 	$2^{-5}$  &  10.82 & 1.47 & 0.82 \\
$3$ & 4.33  & 	$2^{-6}$  &  41.97 & 1.63 & 0.83 \\
$4$ & 3.32  & 	$2^{-7}$  &  164.53 & 1.73 & 0.82\\
$5$ & 2.77  & 	$2^{-8}$  &  679.27 & 2.04 & 0.82\\
	\hline
average rate &  &      0.43    & -0.21 & & \\
    \hline
  \end{tabular}
  \caption{Relative mean square errors (as defined in \eqref{eq:errest1}) versus grid size at highest level and time (in seconds), for problem \eqref{eq:sw} with \eqref{eq:u02}, \eqref{eq:fa} and \eqref{eq:relpermres} (explicit scheme in MLMC solver).}
  \label{tab:eg2}
\end{table}
%
\begin{table}[h]
  \centering
  \begin{tabular}[h]{|c|c|c|c|c|c|}
    \hline
    $L$ &  $\mathcal{R}E$ & $\Dx_L$ & run time & $\abs{E^L(s(t))}_{BV([0,2])}$ & $\|E^L(s(t))\|_{L^{\infty}([0,2])}$ \\
    \hline
$0$ & 16.0 & 	$2^{-3}$  &  1.16 &  1.23  &  0.72\\
$1$ & 9.34 & 	$2^{-4}$  &  3.9 &   1.37  &  0.78\\
$2$ & 5.54 & 	$2^{-5}$  &  11.52 &  1.44 &  0.8\\
$3$ & 4.54 & 	$2^{-6}$  &  31.29 &  1.61 &  0.82\\
$4$ & 2.61 & 	$2^{-7}$  &  88.8&    1.71 &  0.81\\
$5$ & 2.29 & 	$2^{-8}$  &  265.46 & 2.05 &  0.81\\
	\hline
average rate &  &      0.57    & -0.37  & & \\
    \hline
  \end{tabular}
  \caption{Relative mean square errors (as defined in \eqref{eq:errest1}) versus grid size at highest level and time (in seconds), for problem \eqref{eq:sw} with \eqref{eq:u02}, \eqref{eq:fa} and \eqref{eq:relpermres} (implicit scheme in MLMC solver).}
  \label{tab:eg2_2}
\end{table}

\subsection{Sine wave initial data}
As a third example, we have tested the convergence rates on Problem
\eqref{eq:sw}, \eqref{eq:fa}, \eqref{eq:relpermexp} with sine wave
initial data,
\begin{equation}\label{eq:sine}
s_0(x)=\sin(4\pi x),\quad x\in [0,0.5],
\end{equation}
In Figure~\ref{fig:eg3}, on the left hand side, we have plotted a
sample of the solution computed on a mesh with $2048$ points at time
$T=0.5$ and on the right hand side a sample of the MLMC estimator for
$L=7$, $\Dx_0=1/16$, $M_0=8$ and CFL-number $0.4$, also at time
$T=0.5$. 
We observe that the approximation looks quite smooth and that
the variance is evenly distributed over the whole spatial domain in
contrast to the previous examples.
\begin{figure}[h]
  \begin{tabular}{lr}
    \includegraphics[width=0.49\textwidth]{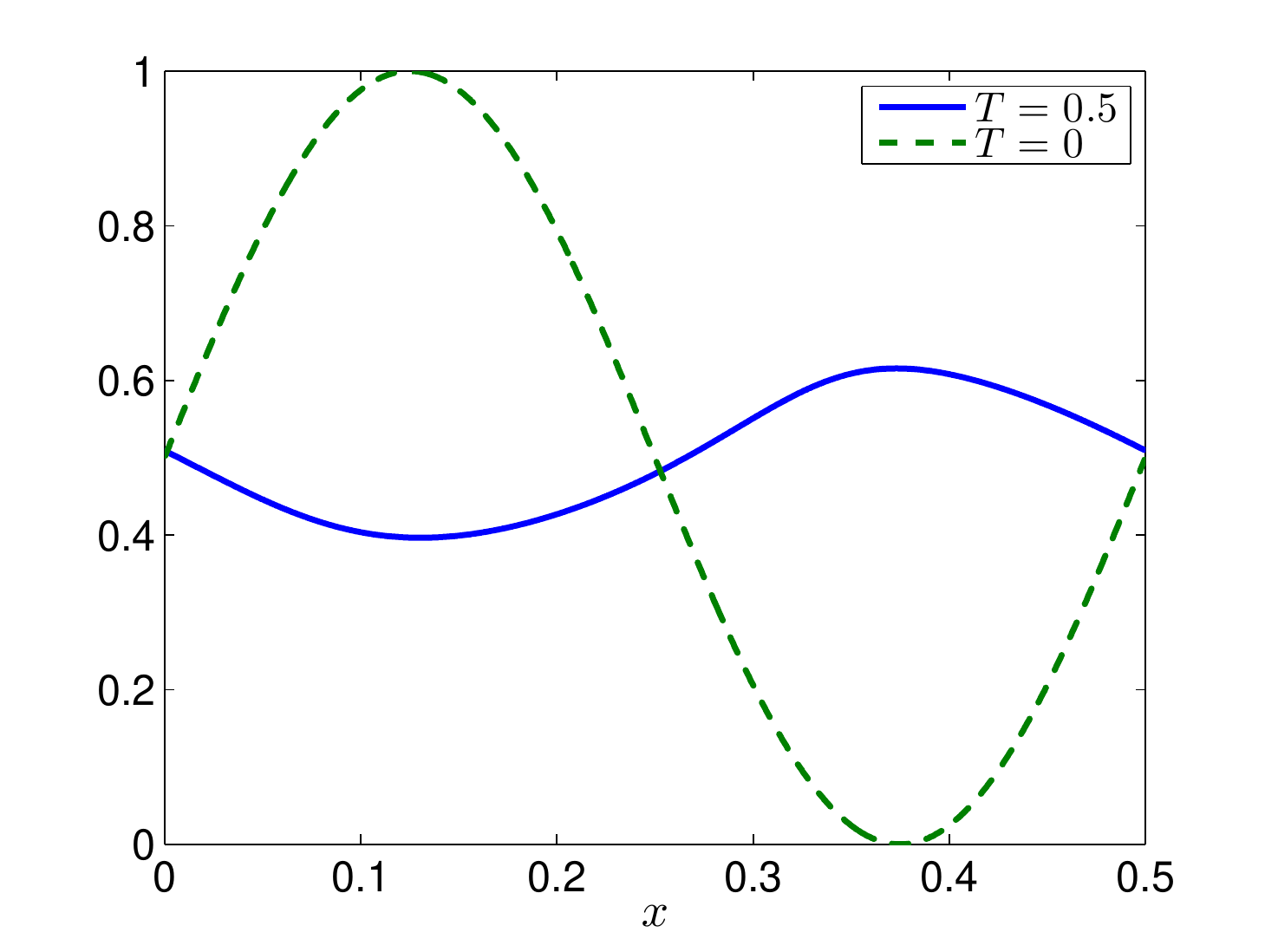}
    &\includegraphics[width=0.49\textwidth]{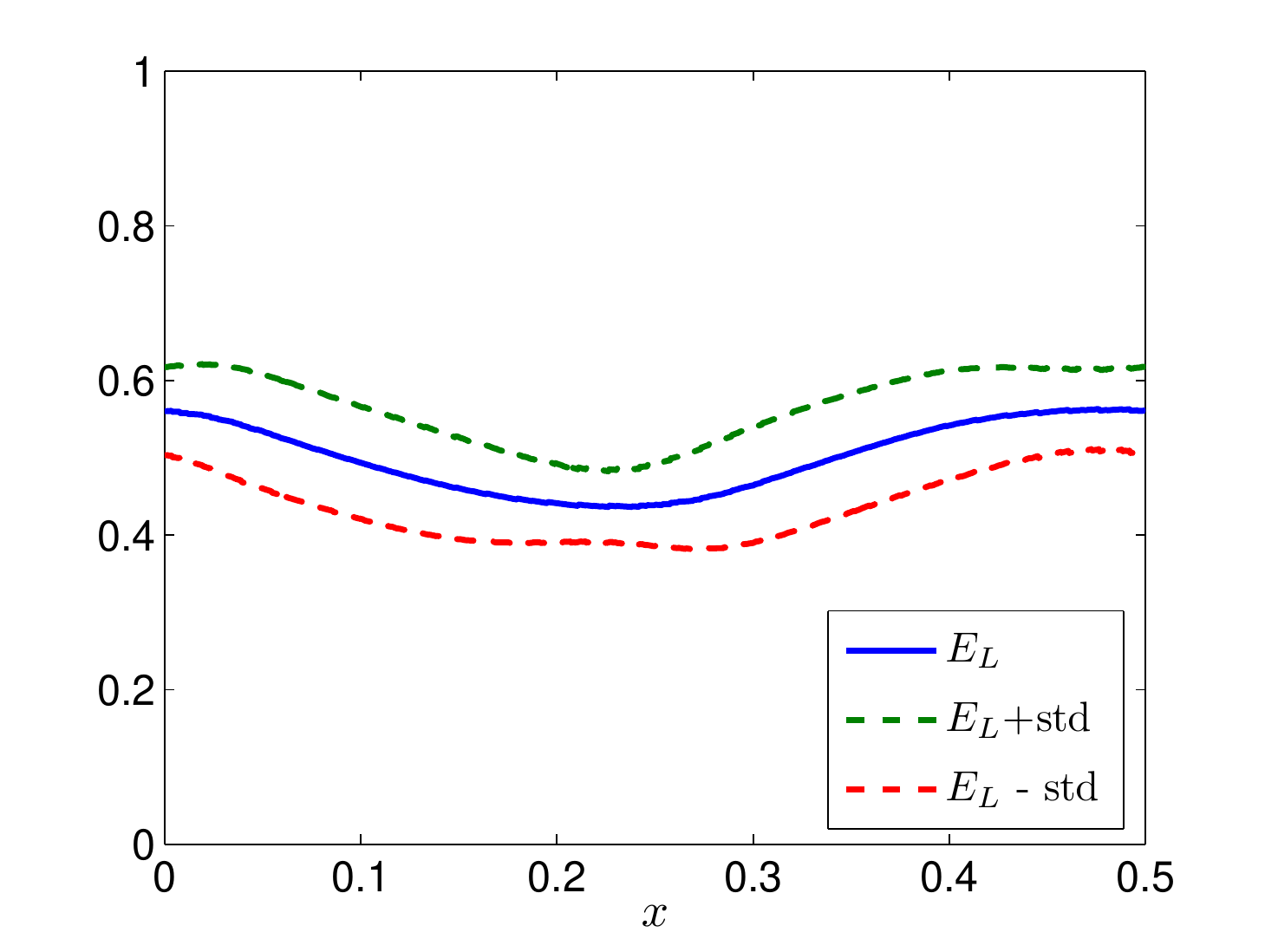}
  \end{tabular}
  \caption{Left: One sample of the random entropy solution of
    \eqref{eq:sw} with \eqref{eq:sine}, \eqref{eq:fa} and
    \eqref{eq:relpermexp} at time $T=0.5$ computed on a mesh with
    $2048$ points. Right: A sample of the estimator $E^L[s(\cdot,t)]$
    for \eqref{eq:sw} with \eqref{eq:sine}, \eqref{eq:fa} and
    \eqref{eq:relpermexp} at time $T=0.5$ (solid line), the dashed
    lines denote $E^L[s(\cdot,t)]$ $\pm$ standard deviation.}
  \label{fig:eg3}
\end{figure}
In Tables~\ref{tab:eg3}, \ref{tab:eg3_2} the estimates \eqref{eq:errest1} on the
$L^2(\Omega; L^1(\R))$-error for $N=5$ are displayed and compared to
the mesh resolution at the finest level and the run time.
\begin{table}[h]
  \centering
  \begin{tabular}[h]{|c|c|c|c|c|c|}
    \hline
    $L$ &  $\mathcal{R}E$ & $\Dx_L$ & run time  & $\abs{E^L(s(t))}_{BV([0,2])}$ & $\|E^L(s(t))\|_{L^{\infty}([0,2])}$\\
    \hline
$0$ & 8.11	& 	$2^{-4}$  &  1.68  & 0.008 	& 0.5\\
$1$ & 6.71      & 	$2^{-5}$  &  9.94  & 0.055	& 0.51\\
$2$ & 5.26 	& 	$2^{-6}$  &  43.36  & 0.131	& 0.53\\
$3$ & 3.35 	& 	$2^{-7}$  &  179.02 & 0.18	& 0.54\\
$4$ & 1.85      & 	$2^{-8}$  &  721.99 &	 0.253  & 0.56\\
	\hline
Average Rate &  &  0.53     & -0.25 & & \\
    \hline
  \end{tabular}
  \caption{Relative mean square errors (as defined in
    \eqref{eq:errest1}) versus grid size at highest level and time,
    for problem \eqref{eq:sw} with \eqref{eq:sine}, \eqref{eq:fa} and
    \eqref{eq:relpermexp} (explicit deterministic solver in MLMC method).} 
  \label{tab:eg3}
\end{table}
%
%
%

\begin{table}[h]
  \centering
  \begin{tabular}[h]{|c|c|c|c|c|c|}
    \hline
    $L$ &  $\mathcal{R}E$ & $\Dx_L$ & run time  & $\abs{E^L(s(t))}_{BV([0,2])}$ & $\|E^L(s(t))\|_{L^{\infty}([0,2])}$\\
    \hline
$0$ &  19.23     & 	$2^{-3}$  &  0.7    & 0.27	& 0.58 \\
$1$ &    5.71    & 	$2^{-4}$  &  3.94   & 0.13	& 0.54   \\
$2$ &    9.78	& 	$2^{-5}$  &   16.33 & 0.08	& 0.52   \\
$3$ &    6.6  	& 	$2^{-6}$  &  62.58  & 0.08	& 0.52   \\
$4$ &    4.91   & 	$2^{-7}$  &  235.8  & 0.14	& 0.53   \\
$5$ &   3.52   &        $2^{-8}$  & 898.15  & 0.22      & 0.55    \\   
	\hline
Average Rate &  &   0.39    & -0.19 & & \\
    \hline
  \end{tabular}
  \caption{Relative mean square errors (as defined in
    \eqref{eq:errest1}) versus grid size at highest level and time,
    for problem \eqref{eq:sw} with \eqref{eq:sine}, \eqref{eq:fa} and
    \eqref{eq:relpermexp} (implicit solver in MLMC method).} 
  \label{tab:eg3_2}
\end{table}

We find a convergence rate of $\approx 0.53$ (explicit scheme) and $\approx 0.39$ (implicit scheme) versus resolution and
$\approx 0.25$ (explicit scheme) and $\approx 0.19$ (implicit scheme) versus run time. We also observe that the rates improve
when the mesh is refined. Interestingly, for this example the convergence rates for the MLMC solver with the implicit scheme are worse than those for the explicit scheme in contrast to the previous examples. The reason could be the samples of the implicit scheme at level $L=1$ which are closer to the reference solution than the following ones at higher levels. This decreases the average rate for the implicit scheme. 
%
%
\bibliographystyle{abbrv}
\bibliography{stochbib}
\end{document}